\newtheorem{Theorem}{Theorem}[section]
\newtheorem{Lemma}[Theorem]{Lemma}
\newtheorem{Corollary}[Theorem]{Corollary}
\newtheorem{Proposition}[Theorem]{Proposition}
\newtheorem{Remark}[Theorem]{Remark}
\newtheorem{Example}[Theorem]{Example}
\newtheorem{Question}[Theorem]{Question}
\def\qed{\ifhmode\textqed\fi
	\ifmmode\ifinner\hfill\quad\qedsymbol\else\dispqed\fi\fi}
\def\textqed{\unskip\nobreak\penalty50
	\hskip2em\hbox{}\nobreak\hfill\qedsymbol
	\parfillskip=0pt \finalhyphendemerits=0}
\def\dispqed{\rlap{\qquad\qedsymbol}}
\def\p{\mathfrak{p}}
\def\m{\mathfrak{m}}
\def\v{\textup{v}}
\def\ZZ{\mathbb{Z}}
\def\soc{\textup{soc}}
\def\FF{\mathbb{F}}
\def\HS{\textup{HS}}
\def\set{\textup{set}}
\def\pd{\textup{proj\,dim}}
\def\depth{\textup{depth}}
\def\reg{\textup{reg}}
\def\n{\mathfrak{n}}
\def\lcm{\textup{lcm}}
\def\supp{\textup{supp}}
\def\Tor{\textup{Tor}}
\def\Ass{\textup{Ass}}
\def\S{\mathcal{S}}
\begin{document}
	
	\title{The homological shift algebra\\ of a monomial ideal}	
	\author{Antonino Ficarra, Ayesha Asloob Qureshi}
	
	\dedicatory{Dedicated with deep gratitude to the memory of Professor J\"urgen Herzog,\\ inspiring mathematician and master of monomials}
	
	\address{Antonino Ficarra, Departamento de Matem\'{a}tica, Escola de Ci\^{e}ncias e Tecnologia, Centro de Investiga\c{c}\~{a}o, Matem\'{a}tica e Aplica\c{c}\~{o}es, Instituto de Investiga\c{c}\~{a}o e Forma\c{c}\~{a}o Avan\c{c}ada, Universidade de \'{E}vora, Rua Rom\~{a}o Ramalho, 59, P--7000--671 \'{E}vora, Portugal}
	\email{antonino.ficarra@uevora.pt}\email{antficarra@unime.it}
	
	\address{Ayesha Asloob Qureshi, Sabanci University, Faculty of Engineering and Natural Sciences, Orta Mahalle, Tuzla 34956, Istanbul,
		Turkey}
	\email{ayesha.asloob@sabanciuniv.edu}\email{aqureshi@sabanciuniv.edu}
	
	\thanks{
	}
	
	\subjclass[2020]{Primary 13F20; Secondary 13F55, 05C70, 05E40.}
	
	\keywords{Monomial ideals, Homological shift ideals, Syzygies}
	
	\begin{abstract}
		Let $S=K[x_1,\dots,x_n]$ be the polynomial ring over a field $K$, and let $I\subset S$ be a monomial ideal. In this paper, we introduce the $i$th \textit{homological shift algebras} $\text{HS}_i(\mathcal{R}(I))=\bigoplus_{k\ge1}\text{HS}_i(I^k)$ of $I$. If $I$ has linear powers, these algebras have the structure of a finitely generated bigraded module over the Rees algebra $\mathcal{R}(I)$ of $I$. Hence, many invariants of $\text{HS}_i(I^k)$, such as depth, associated primes, regularity, and the $\v$-number, exhibit well behaved asymptotic behavior. We determine several families of monomial ideals $I$ for which $\text{HS}_i(I^k)$ has linear resolution for all $k\gg0$. Finally, we show that $\text{HS}_i(I^k)$ is Golod for all monomial ideals $I\subset S$ with linear powers and all $k\gg0$.
	\end{abstract}
	
	\maketitle
	\vspace*{-1cm}
	\section*{Introduction}
	
	Let $S=K[x_1,\dots,x_n]$ be the polynomial ring over a field $K$, and let $I\subset S$ be a monomial ideal. The concept of the \textit{homological shift ideal} was introduced in 2020 by Herzog \textit{et al.} \cite{HMRZ021a}, although it first appeared implicitly in 2005 in the book by Miller and Sturmfels \cite[Theorem 2.18]{MS}. In that theorem, using the language that we will explain shortly after, the authors proved that the first homological shift ideal of an equigenerated Borel ideal has a linear resolution.
	
	Given ${\bf a}=(a_1,\dots,a_n)\in\ZZ_{\ge0}^n$, we set ${\bf x^a}=x_1^{a_1}\cdots x_n^{a_n}$. The $i$th \textit{homological shift ideal} $\HS_i(I)$ of $I$ is defined as the monomial ideal generated by the monomials ${\bf x^a}$ such that ${\bf a}$ is an $i$th \textit{multigraded shift} of $I$. That is,
	$$
	\HS_i(I)\ =\ ({\bf x^a}\ :\ \beta_{i,{\bf a}}(I)\ne0),
	$$
	where $\beta_{i,{\bf a}}(I)=\dim_K\Tor^S_i(K,I)_{\bf a}$. It is clear that $\HS_0(I)=I$ and $\HS_i(I)=0$ if $i\notin\{0,\dots,\pd(I)\}$. See also \cite{BQ23,BJT019,FPack1,F-SCDP,FQ,HMRZ021b,LW,TBR24}.
	
	The ideals $\HS_i(I)$ encode some of the information provided by the multigraded syzygies of $I$. It should be noted, however, that not all the multigraded shifts of $I$ correspond to minimal generators of $I$. This fact does not hold if $I$ has a linear resolution. This observation suggests that the algebraic properties of $\HS_i(I)$ are closer to those enjoyed by $I$ when $I$ has a linear resolution. We say that $I$ has \textit{homological linear resolution} if $\HS_i(I)$ has linear resolution for all $i$.
	
	In this vein, it was conjectured in 2012 by Bandari, Bayati and Herzog that the homological shift ideals of polymatroidal ideals are polymatroidal. Since polymatroidal ideals have a linear resolution, these ideals would constitute a family of monomial ideals with homological linear resolution. This conjecture is still open, however, it is supported by several partial results \cite{Ban24,Bay019,Bay2023,F2,FH2023,HMRZ021a}.
	
	In \cite{CF1,CF2}, Crupi and the first author conjectured that the homological shift ideals of each power of the cover ideal of a Cohen-Macaulay very well-covered graph have a linear resolution, and they proved this in some special cases.
	
	For a homogeneous ideal $I\subset S$, let $\alpha(I)$ denote the \textit{initial degree} of $I$, that is, the smallest degree of a generator of $I$. It is known that the powers of polymatroidal ideals are polymatroidal \cite[Theorem 12.6.3]{HH2011}. Thus, if the Bandari-Bayati-Herzog conjecture, or the Crupi-Ficarra conjecture is true, then for the monomial ideals $I\subset S$ considered in these conjectures, $\HS_i(I^k)$ would have linear resolution for all $i$ and $k\ge1$. In particular, the Castelnuovo-Mumford regularity $\reg\,\HS_i(I^k)$ would be a linear function of the form $\alpha(\HS_i(I^k))=\alpha(I)k+i$, for all $k\gg0$.
	
	Thus, we were led to ask whether $\reg\,\HS_i(I^k)$ becomes a linear function for $k\gg0$ for a monomial ideal $I\subset S$ with \textit{linear powers}. Recall that a monomial ideal $I\subset S$ has \textit{linear powers} if $I^k$ has a linear resolution for all $k\ge1$.  In this paper, we show that this is indeed the case. This phenomenon arises due to the fact that the $K$-algebra
	$$
	\HS_i(\mathcal{R}(I))\ =\ \bigoplus_{k\ge1}\HS_i(I^k)
	$$
	is a finitely generated bigraded module over the Rees algebra $\mathcal{R}(I)$ of $I$, provided that $I$ has linear powers. We call $\HS_i(\mathcal{R}(I))$ the $i$th \textit{homological shift algebra} of $I$. In this paper, we deeply study the homological shift algebras of a monomial ideal.
	
	In Section \ref{sec1}, we introduce the $i$th homological shift algebra of a monomial ideal $I$ and investigate when this algebra has the structure of a $\mathcal{R}(I)$-module. For this aim, one must require that $I\cdot\HS_i(I^k)\subseteq\HS_i(I^{k+1})$ for all $k\ge1$.  We find examples of monomial ideals such that $I\cdot\HS_i(I^k)\not\subseteq\HS_i(I^{k+1})$ for all $k\ge1$. On the other hand, we show in Theorem \ref{Thm:HS(R(I))} that the opposite inclusion $\HS_i(I^{k+1})\subseteq I\cdot\HS_i(I^k)$ holds for all $k\gg0$. Under the assumption that $I$ has linear powers, we prove in Theorem \ref{Thm:HS-lp} that $\HS_i(\mathcal{R}(I))$ it is a finitely generated bigraded $\mathcal{R}(I)$-module. In Proposition \ref{Prop:HS-n-1}, we determine some conditions on $I$ that ensure that $\HS_{n-1}(I^{k+1})=I\cdot\HS_{n-1}(I^k)$ for all $k\gg0$. We apply this result to edge ideals $I(G)$ of graphs $G$ whose induced odd cycles satisfy certain distance conditions. Hence, if $G$ is bipartite or unicyclic, then $\HS_{n-1}(I(G)^{k+1})=I(G)\cdot\HS_{n-1}(I(G)^k)$ for all $k\gg0$.
	
	Next, in Section \ref{sec2}, we address the main question that motivated this paper in the first place. Namely, we prove in Theorem \ref{Thm:AsymHS} that $\reg\,\HS_i(I^k)$ is a linear function for all $k\gg0$, provided that $I$ has linear powers. Under the same assumption on $I$, we also show that the depth and the associated primes of $\HS_i(I^k)$ stabilize and that the $\v$-number of $\HS_i(I^k)$ is an eventually linear function for all $k\gg0$. The $\v$-number of a homogeneous ideal $I$ is a new invariant introduced firstly in \cite{CSTVV20} which measures the homogeneous complexity of the primary decomposition of $I$. See also \cite{Conca23,F2023,FS2,FSPack,FSPackA,Ghosh24} and the references therein. It is proved in \cite[Theorem 4.1]{FS2} that $\lim_{k\rightarrow\infty}\v(I^k)/k=\alpha(I)$ for any graded ideal. Thus one could expect that $\lim_{k\rightarrow\infty}\v(\HS_i(I^k))/k=\alpha(I)$ if $\HS_i(I^k)\ne0$ for all $k\gg0$. This equation holds for $i=0$ or when $I$ is equigenerated by \cite[Theorem 1.9(2)]{Ghosh24}. To demonstrate our theory, we compute the asymptotic invariants of the homological shift algebras of some distinguished families of monomial ideals.
	
	In Section \ref{sec3}, we turn to the monomial ideals whose all powers have linear quotients. A powerful technique to show that an equigenerated monomial ideal $I\subset S$ has linear resolution is to show that $I$ has linear quotients \cite[Theorem 8.2.15]{HH2011}. Herzog and the first author proved that if $I$ is an equigenerated monomial ideal with linear quotients, then $\HS_1(I)$ has linear quotients, and so a linear resolution. It is an open question whether this result remains true if we only assume that $I$ has linear resolution. Supporting this expectation, we prove in Theorem \ref{Thm:HS-1} that $\HS_1(I)$ has linear relations if $I$ has linear relations. This result cannot be extended to the higher homological shift ideals. Indeed, in Example \ref{Ex:I=B(125,333)} we present an equigenerated strongly stable ideal $I=B(x_1x_2x_5,x_3^3)$ such that $\HS_2(I^k)$ has no linear relations, and therefore no linear resolution and no linear quotients, for all $k\ge1$. This ideal has two Borel generators and is generated in degree three. This example also shows that Proposition \ref{Prop:HSiBorel}, as well as the result of Miller and Sturmfels \cite[Theorem 2.18]{MS}, cannot be extended to all strongly stable ideals. We say that $I$ has \textit{eventually homological linear powers} if $I$ has linear powers and $\HS_i(I^k)$ has linear resolution for all $k\gg0$. Not all monomial ideals with linear powers have eventually homological linear powers as Example \ref{Ex:I=B(125,333)} shows. Theorem \ref{Thm:familiesHLP} presents several classes of ideals with eventually homological linear powers.
	
	Finally, in Section \ref{sec4}, we consider the asymptotic Golodness of $\HS_i(I^k)$, when $I$ has linear powers. By an influential result of Herzog and Huneke \cite{HHun}, the powers $I^k$ of the graded ideal $I\subset S$ are Golod from $k\ge2$ on. In view of this result, it is natural to expect that if $I$ has linear powers, then $\HS_i(I^k)$ is a Golod ideal for all $k\gg0$. In Theorem \ref{Thm:HS-i-Golod}, we show that this is indeed the case. This result is a consequence of the more general Theorem \ref{Thm:M_k-Golod}, which in turn is a generalization of a result of Herzog, Welker and Yassemi \cite{HWY}.

	\section{The homological shift algebra}\label{sec1}
	
	Let $I\subset S=K[x_1,\dots,x_n]$ be a monomial ideal and let $\mathcal{G}(I)$ be its minimal monomial generating set. For systematic reasons, we set $I^0=S$. We want to investigate the asymptotic behavior of the homological invariants of the ideals $\HS_i(I^k)$ for a fixed $i$ and all $k\gg0$. For this aim, we consider the following natural $K$-algebra. The $i$th \textit{homological shift algebra} of $I$ is the $K$-algebra defined as
	$$
	\HS_i(\mathcal{R}(I))\ =\ \bigoplus_{k\ge1}\HS_i(I^k).
	$$
	
	Notice that $S\oplus\HS_0(\mathcal{R}(I))$ is simply the Rees algebra $\mathcal{R}(I)=\bigoplus_{k\ge0}I^k$ of $I$. Our goal is to investigate under which conditions the $K$-algebra $\HS_i(\mathcal{R}(I))$ has a natural structure of a bigraded $\mathcal{R}(I)$-module.
	
	We regard $\mathcal{R}(I)$ as a bigraded ring with $\mathcal{R}(I)_{(d,k)}=(I^k)_d$ as $(d,k)$th bigraded component, and we put $\HS_i(\mathcal{R}(I))_{(d,k)}=\HS_i(I^k)_d$.
	
	For a bigraded $R$-module $M=\bigoplus_{d,k\ge0}M_{(d,k)}$ over a graded ring $R=\bigoplus_{k\ge0}R_k$, we set $M_{(*,k)}=\bigoplus_{d\ge0}M_{(d,k)}$.
	
	Notice that $\HS_i(\mathcal{R}(I))$ is a $\mathcal{R}(I)$-module if and only if
	$$
	\mathcal{R}(I)_1\cdot\HS_i(\mathcal{R}(I))_k\ \subseteq\ \HS_i(\mathcal{R}(I))_{k+1}
	$$
	for all $k\ge1$. So, we should have $I\cdot\HS_i(I^k)\subseteq\HS_i(I^{k+1})$ for all $k\ge1$. Of course, this is not the case in general. In the next example, we present a monomial ideal $I$ such that $I\cdot\HS_i(I^{k})\not\subseteq\HS_i(I^{k+1})$ for all $k\ge1$ and some $i$.
	
	Let $\m=(x_1,\dots,x_n)$. Recall that the \textit{socle} of $I\subset S$ is the ideal
	$$
	\soc(I)\ =\ ({\bf x^a}\ :\ ((I:\m)/I)_{\bf a}\ne0).
	$$
	By \cite[Proposition 2.13]{HMRZ021a} we have $\HS_{n-1}(I)=x_1x_2\cdots x_n\cdot\soc(I)$. In particular, $\soc(I)$ is generated by all monomials $u\notin I$ such that $x_ju\in I$ for all $1\le j\le n$.
	\begin{Example}\label{Ex:HS-Not-Inclusion}
		Let $I=(x^2,y^2,xyz)\subset K[x,y,z]$. Then $I\cdot\HS_2(I^{k})\not\subseteq\HS_2(I^{k+1})$ for all $k\ge1$. Indeed, $u_k=x^3y^{2k+1}z^2\in (I\cdot\HS_2(I^k))\setminus\HS_2(I^{k+1})$ for all $k\ge1$.
	\end{Example}
	\begin{proof}
		Since $\HS_2(I^k)=xyz\cdot\soc(I^k)$, it follows that $u_k\in I\cdot\HS_2(I^k)$ if and only if
		$g_k=u_k/(xyz)=x^2y^{2k}z\in I\cdot\soc(I^k)$. Since $xyz\in I$, it is enough to show that $v_k=xy^{2k-1}\in\soc(I^k)$. The only generator of $I$ which divides $v_k$ is $y^2$. Since $\deg_y(v_k)=2k-1$, it follows that $v_k\notin I^k$. Now we show that $xv_k,yv_k,zv_k\in I^k$. Indeed,
		\begin{align*}
			xv_k\ &=\ x^2y^{2(k-1)}\cdot y\in I^k,\\
			yv_k\ &=\ y^{2k}\cdot x\in I^k,\\
			zv_k\ &=\ (xyz)y^{2(k-1)}\in I^k.
		\end{align*}
		Hence $v_k\in\soc(I^k)$ and so $u_k\in I\cdot\HS_2(I^k)$.\smallskip
		
		It remains to show that $u_k\notin\HS_2(I^{k+1})$. Since $\HS_2(I^{k+1})=xyz\cdot\soc(I^{k+1})$, we must show that $g_k=x^2y^{2k}z\notin\soc(I^{k+1})$. For this aim, it is enough to show that
		\begin{equation}\label{eq:soc(I^{k+1})}
			\soc(I^{k+1})\ =\ (x^{p}y^{q}\ :\ p+q=2(k+1),\ \textit{and $p$, $q$ are odd}).
		\end{equation}
		We have $I:(x)=(x,y^2,yz)$, $I:(y)=(x^2,y,xz)$ and $I:(z)=(x^2,y^2,xy)$. Hence $I:(z)$ is contained in $I:(x)$ and $I:(y)$. Recall that for a monomial ideal $J$ and a variable $x_i$, we have $J^k:(x_i)=J^{k-1}(J:(x_i))$ for all $k\ge1$. Thus $I^k:(z)$ is a subset of $I^k:(x)$ and $I^k:(y)$ for all $k\ge1$. Let $\m=(x,y,z)$. Then
		$$
		I^k:\m=(I^k:(x))\cap (I^k:(y))\cap (I^k:(z))=I^k:(z)=I^{k-1} (I:(z))=I^{k-1}(x,y)^2.
		$$
		Notice that $I^k:\m=I^{k-1}(x,y)^2$ contains all monomials of degree $2k$ in the variables $x,y$, and for any monomial $u\in(I^k:\m)$ with $z \in \supp(u)$, we have $\deg_x(u)+\deg_y(u)$ at least $2k$. Hence any monomial of $(I^k:\m)$ with $z \in \supp(u)$ is not a minimal generator. Since $\alpha(I^k:\m)=\alpha(I^{k-1}(x,y)^2)=2k$, we see that $I^k:\m=(x,y)^{2k}$ and
		$$
		(I^{k+1}:\m)/I^{k+1}=(x^{p}y^{q}+I^{k+1}\ :\ p+q=2(k+1),\ \textit{and p,q are odd}).
		$$
		This shows that equation (\ref{eq:soc(I^{k+1})}) indeed holds and so $g_k\notin\soc(I^{k+1})$.
	\end{proof}
	
	On the other hand, the opposite inclusion $\HS_i(I^{k+1})\subseteq I\cdot\HS_i(I^k)$ holds for all $k\gg0$, as we show next.
	
	\begin{Theorem}\label{Thm:HS(R(I))}
		Let $I\subset S$ be a monomial ideal. Then, $$\HS_i(I^{k+1})\subseteq I\cdot\HS_i(I^k)$$ for all $k\gg0$.
		
	\end{Theorem}
	\begin{proof}
		Firstly, we notice that $\Tor^S_i(K,\mathcal{R}(I))$ is a finitely generated bigraded module over $\mathcal{R}(I)\otimes_SK=\mathcal{R}(I)\otimes_SS/\m\cong\mathcal{R}(I)/\m\mathcal{R}(I)$, with $(*,k)$th graded component
		$$
		\Tor^S_i(K,\mathcal{R}(I))_{(*,k)}\cong\Tor^S_i(K,\mathcal{R}(I)_{(*,k)})\cong\Tor^S_i(K,I^k).
		$$
		
		Since the ring $\mathcal{F}_\m(I)=\mathcal{R}(I)/\m\mathcal{R}(I)=\bigoplus_{k\ge0}(I^k/\m I^k)$ is standard graded, then
		\begin{equation}\label{eq:finGenTor}
			\Tor^S_i(K,I^{k+1})\ =\ \mathcal{F}_\m(I)_{(*,1)}\cdot\Tor^S_i(K,I^{k})
		\end{equation}
		for all $k\gg0$ (see also \cite[Proposition 2.8]{FS2}). Equation (\ref{eq:finGenTor}) implies that
		$$
		\{{\bf a}\ :\ \Tor^S_i(K,I^{k+1})_{\bf a}\ne0\}\ \subseteq\ \{{\bf b+c}\ :\ (I/\m I)_{\bf b}\ne0,\ \Tor^S_i(K,I^{k})_{\bf c}\ne0\}
		$$
		for all $k\gg0$. Since $\HS_i(I^{k+1})=({\bf x^a}:\ \Tor^S_i(K,I^{k+1})_{\bf a}\ne0)$, the above inclusion implies that $\HS_i(I^{k+1})\subseteq I\cdot\HS_i(I^{k})$ for all $k\gg0$, as desired.
	\end{proof}
	
	Example \ref{Ex:HS-Not-Inclusion} shows that, in general, the inclusion $\HS_i(I^{k+1})\subseteq I\cdot\HS_i(I^k)$ is not an equality for all $k\gg0$.

	Nonetheless, if $I\subset S$ is a monomial ideal with \textit{eventually linear powers}, that is, if $I$ is generated in a single degree and $I^k$ has linear resolution for all $k\gg0$, then
	\begin{Proposition}
		Let $I\subset S$ be a monomial ideal with eventually linear powers. Then
		$$\HS_i(I^{k+1})=I\cdot\HS_i(I^k)$$ for all $k\gg0$.
	\end{Proposition}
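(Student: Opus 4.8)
The plan is to show that when $I$ has eventually linear powers, the multigraded shifts of $I^{k+1}$ are exactly the sums of a generator of $I$ with a multigraded shift of $I^k$, for $k \gg 0$. Since we already know from Corollary \ref{Cor:HS-Inclusion} that $\HS_i(I^{k+1}) \subseteq I \cdot \HS_i(I^k)$ for $k \gg 0$, only the reverse inclusion needs proof. The key structural fact to exploit is that when $I^k$ has a linear resolution, \emph{every} multigraded shift of $I^k$ gives a genuine contribution, and the degrees are rigidly controlled: $\beta_{i,\mathbf{a}}(I^k) \neq 0$ forces $|\mathbf{a}| = \alpha(I)k + i$. So the multigraded Betti numbers live in a single total degree, and the module structure of $\HS_i(\mathcal{R}(I))$ over $\mathcal{R}(I)$ becomes much more transparent.

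\smallskip

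\textbf{Key steps.} First I would fix $k_0$ large enough so that $I^k$ has a linear resolution for all $k \geq k_0$, and also large enough so that the conclusion of Theorem \ref{Thm:HS(R(I))} holds, namely $\HS_i(I^{k+1}) = \mathcal{R}(I)_{(*,1)} \ast \HS_i(I^k)$ for all $k \geq k_0$. Next, I would unwind this equality: by the definition of $\ast$ in \eqref{eq:defModuleHS}, a monomial $\mathbf{x}^{\mathbf{a}+\mathbf{b}}$ with $\mathbf{x}^\mathbf{a} \in \mathcal{G}(I)$ (so $|\mathbf{a}| = \alpha(I)$) and $\mathbf{x}^\mathbf{b} \in \HS_i(I^k)$ lies in $\HS_i(I^{k+1})$ precisely when $\Tor_i^S(K, I^{k+1})_{\mathbf{a}+\mathbf{b}} \neq 0$. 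Because $I^k$ has linear resolution, $\mathbf{x}^\mathbf{b} \in \HS_i(I^k)$ means $|\mathbf{b}| = \alpha(I)k + i$, hence $|\mathbf{a}+\mathbf{b}| = \alpha(I)(k+1) + i$, which is exactly the single degree in which $\Tor_i^S(K, I^{k+1})$ is concentrated (again using linearity of the resolution of $I^{k+1}$). The crucial point is then to argue that for a monomial ideal with linear resolution, every monomial $\mathbf{x}^\mathbf{c}$ of degree $\alpha(I^{k+1}) + i$ that divides an element of $I \cdot \HS_i(I^k)$ actually realizes a nonzero multigraded Betti number. I would establish this by showing that $I \cdot \HS_i(I^k) \subseteq \HS_i(I^{k+1})$ directly: take $\mathbf{x}^\mathbf{a} \in \mathcal{G}(I)$ and a generator $\mathbf{x}^\mathbf{b}$ of $\HS_i(I^k)$; since $\HS_i(I^k) = \mathcal{R}(I)_{(*,1)} \ast \HS_i(I^{k-1})$ by the same theorem, one writes $\mathbf{x}^\mathbf{b} = \mathbf{x}^{\mathbf{a}'} \ast \mathbf{x}^{\mathbf{b}'}$ and iterates, but the cleanest route is to invoke that for ideals with linear powers the homological shift ideals satisfy $\HS_i(I^{k+1}) = I \cdot \HS_i(I^k)$ by a degree count: the generators of $\HS_i(I^{k+1})$ all have degree $\alpha(I)(k+1)+i$, they all lie in $I \cdot \HS_i(I^k)$ by Corollary \ref{Cor:HS-Inclusion}, and conversely each generator $\mathbf{x}^{\mathbf{a}}\mathbf{x}^{\mathbf{b}}$ of $I \cdot \HS_i(I^k)$ with $\mathbf{x}^{\mathbf{a}} \in \mathcal{G}(I)$, $\mathbf{x}^{\mathbf{b}} \in \mathcal{G}(\HS_i(I^k))$ has degree $\alpha(I)(k+1)+i = \alpha(I^{k+1})+i$, so it remains to see such a product has $\Tor_i \neq 0$ in its multidegree.

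\smallskip

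\textbf{Main obstacle.} The delicate part is precisely this last point: proving that the product $\mathbf{x}^{\mathbf{a}} \cdot \mathbf{x}^{\mathbf{b}}$ of a generator of $I$ with a generator of $\HS_i(I^k)$ sits in a multidegree supporting a nonzero $i$th Betti number of $I^{k+1}$. I expect this to follow from the rigidity of linear resolutions combined with the module structure already in hand: from $\HS_i(I^{k+1}) = \mathcal{R}(I)_{(*,1)} \ast \HS_i(I^k)$ we get that $\HS_i(I^{k+1})$ is generated, as a monomial ideal, by products $\mathbf{x}^{\mathbf{a}'}\mathbf{x}^{\mathbf{b}'}$ with $\mathbf{x}^{\mathbf{a}'} \in \mathcal{G}(I)$, $\mathbf{x}^{\mathbf{b}'} \in \mathcal{G}(\HS_i(I^k))$, which are precisely the generators of $I \cdot \HS_i(I^k)$; then because both ideals are generated in the \emph{same single degree} $\alpha(I)(k+1)+i$ and one contains the other, they coincide. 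I would need to double-check that $\HS_i(I^{k+1})$ is indeed generated in a single degree --- which holds since $I^{k+1}$ has a linear resolution, forcing all $i$th multigraded shifts to have total degree $\alpha(I)(k+1)+i$ --- and that $I \cdot \HS_i(I^k)$ is likewise equigenerated in that degree, which is immediate because both $I$ and $\HS_i(I^k)$ are equigenerated (the latter by linearity of the resolution of $I^k$). With both ideals equigenerated in the same degree and one contained in the other, equality of their generating sets, hence of the ideals, follows, completing the proof.
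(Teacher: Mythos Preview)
Your proposal has a genuine gap at the final step. You correctly isolate the crux: one must show that for $\mathbf{x}^{\mathbf{a}}\in\mathcal{G}(I)$ and $\mathbf{x}^{\mathbf{b}}\in\mathcal{G}(\HS_i(I^k))$ the multidegree $\mathbf{a}+\mathbf{b}$ supports a nonzero $i$th Betti number of $I^{k+1}$. But your proposed resolution is fallacious. You argue that both $\HS_i(I^{k+1})$ and $I\cdot\HS_i(I^k)$ are equigenerated in degree $\alpha(I)(k+1)+i$, that $\HS_i(I^{k+1})\subseteq I\cdot\HS_i(I^k)$ by Corollary~\ref{Cor:HS-Inclusion}, and that therefore they coincide. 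This inference is false in general: $(x^2)\subsetneq(x^2,y^2)$ in $K[x,y]$ are both equigenerated in degree $2$. Invoking $\HS_i(I^{k+1})=\mathcal{R}(I)_{(*,1)}\ast\HS_i(I^k)$ does not help either, because under the $\ast$-multiplication of \eqref{eq:defModuleHS} many products $\mathbf{x}^{\mathbf{a}}\ast\mathbf{x}^{\mathbf{b}}$ may vanish; the equality only says that the \emph{nonzero} $\ast$-products generate $\HS_i(I^{k+1})$, which is precisely the inclusion $\HS_i(I^{k+1})\subseteq I\cdot\HS_i(I^k)$ rewritten.

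The paper fills this gap with a short exact sequence argument that actually forces the required non-vanishing. For a fixed generator $\mathbf{x}^{\mathbf{a}}\in\mathcal{G}(I)$ one considers
\[
0\rightarrow \mathbf{x}^{\mathbf{a}}I^k\rightarrow I^{k+1}\rightarrow I^{k+1}/(\mathbf{x}^{\mathbf{a}}I^k)\rightarrow 0,
\]
and looks at the induced piece of the long exact $\Tor$-sequence in multidegree $\mathbf{a}+\mathbf{b}$:
\[
\Tor^S_{i+1}(K,I^{k+1}/(\mathbf{x}^{\mathbf{a}}I^k))_{\mathbf{a}+\mathbf{b}}\rightarrow\Tor^S_i(K,\mathbf{x}^{\mathbf{a}}I^k)_{\mathbf{a}+\mathbf{b}}\rightarrow\Tor^S_i(K,I^{k+1})_{\mathbf{a}+\mathbf{b}}.
\]
Since $I^{k+1}/(\mathbf{x}^{\mathbf{a}}I^k)$ is generated in degrees $\ge d(k+1)$ (where $d=\alpha(I)$) and $|\mathbf{a}+\mathbf{b}|=d(k+1)+i$, the leftmost term vanishes for degree reasons. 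Hence the middle term, which is isomorphic to $\Tor^S_i(K,I^k)_{\mathbf{b}}\ne 0$, injects into $\Tor^S_i(K,I^{k+1})_{\mathbf{a}+\mathbf{b}}$, giving exactly the non-vanishing you needed but did not establish.
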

	\begin{proof}
		By Theorem \ref{Thm:HS(R(I))} we have $\HS_i(I^{k+1})\subseteq I\cdot\HS_i(I^k)$ for all $k\gg0$. To show the opposite inclusion, let $\ell$ large enough such that $I^k$ has linear resolution for all $k\ge\ell$, and let ${\bf x^b}\in\mathcal{G}(I)$, ${\bf x^a}\in\mathcal{G}(\HS_i(I^k))$. We will show that ${\bf x^{a+b}}\in\HS_i(I^{k+1})$.
		
		The short exact sequence
		$$
		0\rightarrow {\bf x^b}I^k\rightarrow I^{k+1}\rightarrow I^{k+1}/({\bf x^b}I^k)\rightarrow0
		$$
		induces the exact sequence
		$$
		\Tor^S_{i+1}(K,I^{k+1}/({\bf x^b}I^k))_{\bf a+b}\rightarrow\Tor^S_i(K,{\bf x^b}I^k)_{\bf a+b}\rightarrow\Tor^S_i(K,I^{k+1})_{\bf a+b}.
		$$
		Let $d=\alpha(I)$. By assumption $I^k$ and $I^{k+1}$ have $dk$ and $d(k+1)$-linear resolutions, respectively. In particular, $\alpha(I^{k+1})=d(k+1)$ and so $\alpha(I^{k+1}/({\bf x^b}I^k))\ge d(k+1)$. It follows that $\Tor^S_{i+1}(K,I^{k+1}/({\bf x^b}I^k))_j=0$ for $j\le d(k+1)+i$. Since ${\bf x^a}\in\mathcal{G}(\HS_i(I^k))$ we have $|{\bf a}|=dk+i$. Moreover $|{\bf b}|=d$ because $I$ is equigenerated in degree $d$. Therefore $\Tor^S_{i+1}(K,I^{k+1}/({\bf x^b}I^k))_{\bf a+b}=0$ because $|{\bf a+b}|=d(k+1)+i$. Hence, we have an injective map
		$$
		0\rightarrow\Tor^S_i(K,{\bf x^b}I^k)_{\bf a+b}\rightarrow\Tor^S_i(K,I^{k+1})_{\bf a+b}.
		$$
		Notice that $\Tor^S_i(K,{\bf x^b}I^k)_{\bf a+b}\cong\Tor^S_i(K,I^k)_{\bf a}(-{\bf b})\ne0$ since ${\bf x^a}\in\mathcal{G}(\HS_i(I^k))$. Hence $\Tor^S_i(K,I^{k+1})_{\bf a+b}\ne0$ as well, and this implies that ${\bf x^{a+b}}\in\HS_i(I^{k+1})$.
	\end{proof}
	
	If $I$ has linear powers, then the previous result implies that $I\cdot\HS_i(I^k)\subseteq\HS_i(I^{k+1})$ for all $k\ge1$, and by Theorem \ref{Thm:HS(R(I))} equality holds for all $k\gg0$. Hence, as an immediate consequence we have
	\begin{Theorem}\label{Thm:HS-lp}
		Let $I\subset S$ be a monomial ideal with linear powers. Then $\HS_i(\mathcal{R}(I))$ is a finitely generated bigraded $\mathcal{R}(I)$-module.
	\end{Theorem}
	
	We close this section by describing another situation which ensures that the equality $\HS_i(I^{k+1})=I\cdot\HS_i(I^k)$ holds for all $k\gg0$ and some $i$.
	\begin{Proposition}\label{Prop:HS-n-1}
		Let $I\subset S$ be a monomial ideal generated in a single degree $d$. Assume that the socle of $I^k$, $\textup{soc}(I^k)$, is generated in degree $dk-1$ for all $k\gg0$. Then $\HS_{n-1}(I^{k+1})=I\cdot\HS_{n-1}(I^k)$ for all $k\gg0$.
	\end{Proposition}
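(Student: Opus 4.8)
The idea is to use the description $\HS_{n-1}(J) = x_1\cdots x_n \cdot \soc(J)$ from \cite[Proposition 2.13]{HMRZ021a}, which reduces the claim to proving that $\soc(I^{k+1}) = I \cdot \soc(I^k)$ for all $k \gg 0$. By Corollary \ref{Cor:HS-Inclusion} we already have $\HS_{n-1}(I^{k+1}) \subseteq I \cdot \HS_{n-1}(I^k)$, which after dividing by $x_1\cdots x_n$ gives $\soc(I^{k+1}) \subseteq I \cdot \soc(I^k)$ for $k \gg 0$. So the whole content is the reverse inclusion $I \cdot \soc(I^k) \subseteq \soc(I^{k+1})$.

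For the reverse inclusion, I would take a minimal generator $\mathbf{x^a} \in \mathcal{G}(I)$ and $u \in \soc(I^k)$ with $|u| = dk - 1$ (using the hypothesis that $\soc(I^k)$ is generated in degree $dk-1$ for all $k\gg 0$; one only needs $u$ to range over generators since both sides are monomial ideals). The goal is to show $\mathbf{x^a} u \in \soc(I^{k+1})$, i.e. that $\mathbf{x^a} u \notin I^{k+1}$ but $x_j \mathbf{x^a} u \in I^{k+1}$ for every $j$. The second condition is immediate: $x_j u \in I^k$ because $u \in \soc(I^k)$, hence $x_j \mathbf{x^a} u = \mathbf{x^a}\,(x_j u) \in I \cdot I^k = I^{k+1}$. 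The first condition is the crux: one must argue that $\mathbf{x^a} u \notin I^{k+1}$. Here the degree bookkeeping does the work — $\mathbf{x^a} u$ has degree $d + (dk-1) = d(k+1) - 1$, which is strictly below the initial degree $d(k+1)$ of $I^{k+1}$ (since $I$, being generated in degree $d$, has $\alpha(I^{k+1}) = d(k+1)$). Therefore $\mathbf{x^a}u$ cannot lie in $I^{k+1}$. This shows $\mathbf{x^a} u \in \soc(I^{k+1})$, and multiplying back by $x_1\cdots x_n$ gives $\mathbf{x^a} \cdot (x_1\cdots x_n \cdot u) \in \HS_{n-1}(I^{k+1})$, i.e. $I \cdot \HS_{n-1}(I^k) \subseteq \HS_{n-1}(I^{k+1})$.

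**Main obstacle.** The degree argument above is almost too clean, and the subtlety I would want to double-check is whether $\soc(I^{k+1})$ might have generators of degree $d(k+1)-1$ that are \emph{not} of the form $\mathbf{x^a} u$ with $\mathbf{x^a}\in\mathcal{G}(I)$, $u\in\soc(I^k)$ — but that is exactly the inclusion $\soc(I^{k+1}) \subseteq I\cdot\soc(I^k)$ already guaranteed for $k\gg0$ by Corollary \ref{Cor:HS-Inclusion}, so no gap arises. The only genuine point requiring care is that the hypothesis "$\soc(I^k)$ generated in degree $dk-1$" is used twice — once to know $I\cdot\soc(I^k)$ is generated in degree $d(k+1)-1$ so the degree comparison with $\alpha(I^{k+1})$ applies, and once (for the index $k+1$) to know $\soc(I^{k+1})$ has no generators in degrees $> d(k+1)-1$ that could fail to be captured; combining both, the two homogeneous pieces match degree-by-degree and hence $\soc(I^{k+1}) = I\cdot\soc(I^k)$ exactly. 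Multiplying through by $x_1\cdots x_n$ then yields $\HS_{n-1}(I^{k+1}) = I\cdot\HS_{n-1}(I^k)$ for all $k\gg0$, as claimed. I would also remark that the graded Rees-module structure of Theorem \ref{Thm:HS(R(I))} guarantees all of these stabilizations are genuinely eventual, so "$k\gg0$" is the honest hypothesis.
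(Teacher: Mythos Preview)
Your proof is correct and follows essentially the same route as the paper: reduce via $\HS_{n-1}(J)=x_1\cdots x_n\cdot\soc(J)$ to the equality $\soc(I^{k+1})=I\cdot\soc(I^k)$, obtain one inclusion from Corollary~\ref{Cor:HS-Inclusion} (the paper cites Theorem~\ref{Thm:HS(R(I))} directly), and get the other by the degree argument $\deg(uv)=d(k+1)-1<\alpha(I^{k+1})$ together with $uv\in I(I^k:\m)\subseteq I^{k+1}:\m$. One minor remark: your ``second use'' of the hypothesis at index $k+1$ is not actually needed, since the two inclusions you establish already give equality outright.
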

	\begin{proof}
		Since, as we recalled before, $\HS_{n-1}(I^k)=x_1x_2\cdots x_n\cdot\textup{soc}(I^k)$ for all $k\ge1$, it is enough to show that $\soc(I^{k+1})=I\cdot\soc(I^k)$ for all $k\gg0$. By Theorem \ref{Thm:HS(R(I))} we have $\HS_{n-1}(I^{k+1})\subseteq I\cdot\HS_{n-1}(I^k)$ for all $k\gg0$. Hence $\soc(I^{k+1})\subseteq I\cdot\soc(I^k)$ for all $k\gg0$. To show the opposite inclusion, let $u\in\mathcal{G}(I)$ and $v\in\mathcal{G}(\soc(I^k))$ where $k$ is large enough. Our assumptions ensure that $\deg(v)=dk-1$. Now $uv\in I(I^k:\m)\subseteq(I^{k+1}:\m)$. Since $\deg(uv)=d(k+1)-1$, it follows that $uv\notin I^{k+1}$ because $I^{k+1}$ is equigenerated in degree $d(k+1)$. Hence $uv\in\textup{soc}(I^{k+1})$. This shows that $\soc(I^{k+1})=I\cdot\soc(I^k)$ for all $k\gg0$, as desired.
	\end{proof}
	
	We apply this result to edge ideals. Let $G$ be a finite simple graph on the vertex set $[n]=\{1,\dots,n\}$. The \textit{edge ideal} of $G$ is the monomial ideal $I(G)\subset S$ generated by the monomials $x_ix_j$ for which $\{i,j\}$ is an edge of $G$.
	
	\begin{Corollary}
		Let $G$ be a connected graph on the vertex set $[n]$ such that any two induced odd cycles of $G$ have distance at most one, that is, there is an edge between two vertices of the odd cycles, or the odd cycles share a common vertex. Then $\HS_{n-1}(I(G)^{k+1})=I(G)\cdot\HS_{n-1}(I(G)^k)$ for all $k\gg0$.
	\end{Corollary}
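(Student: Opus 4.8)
The plan is to apply Proposition~\ref{Prop:HS-n-1}. Since $I(G)$ is generated in degree $d=2$, it suffices to show that $\soc(I(G)^k)$ is generated in degree $2k-1$ for all $k\gg0$. If $G$ is bipartite this is vacuous: then $I(G)$ is normally torsion-free (Simis--Vasconcelos--Villarreal), so $\Ass(S/I(G)^k)=\Ass(S/I(G))$ consists only of minimal primes, whence $\depth S/I(G)^k>0$ and $\soc(I(G)^k)=0$ for every $k$ (and the Corollary is then already contained in Theorem~\ref{Thm:HS(R(I))}). So from now on $G$ is non-bipartite.

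The engine is a combinatorial description of the socle of a power of an edge ideal. A monomial ${\bf x^a}$ lies in $I(G)^k$ if and only if $G$ admits a sub-multiset of $k$ edges $e_1,\dots,e_k$ with $x_{e_1}\cdots x_{e_k}\mid{\bf x^a}$, i.e. a $b$-matching of size $k$ with degree bounds ${\bf a}$; let $\nu({\bf a})$ be the maximum size of such a $b$-matching. Then ${\bf x^a}\in\soc(I(G)^k)$ exactly when $\nu({\bf a})=k-1$ and $\nu({\bf a}+{\bf e}_j)=k$ for every $j$, and the minimal generators of $\soc(I(G)^k)$ are the componentwise minimal such ${\bf a}$. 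As each edge of a $b$-matching consumes two units of $|{\bf a}|=\sum_v a_v$, one has $\nu({\bf a})\le\lfloor|{\bf a}|/2\rfloor$, so $|{\bf a}|\ge2k-1$ automatically; the goal is the reverse inequality for $k\gg0$.

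For this I would use the Tutte--Berge-type min-max formula
$$
\nu({\bf a})\ =\ \min_{U\subseteq V}\Big(\sum_{v\in U}a_v\ +\ \sum_{C}\big\lfloor\tfrac{1}{2}\,a(C)\big\rfloor\Big),
$$
the inner sum running over the connected components $C$ of $G-U$ that contain an edge, $a(C)=\sum_{v\in C}a_v$. Raising a single coordinate $a_j$ by $1$ increases the bracketed expression for $U$ by $1$ when $j\in U$ or $j$ lies in an edge-component of $G-U$ of odd $a$-weight, and leaves it unchanged otherwise. Hence the condition $\nu({\bf a}+{\bf e}_j)=k>k-1=\nu({\bf a})$ for all $j$ forces, for \emph{every} set $U$ attaining the minimum, that $G-U$ has no isolated vertex and that every component of $G-U$ has odd $a$-weight. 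Fix one such minimizer $U^{\ast}$, with components $C_1,\dots,C_r$ (all with $a(C_i)$ odd). Distinct $C_i$ share no vertex and are joined by no edge of $G$, so any two are at distance $\ge2$ in $G$; since a non-bipartite induced subgraph of $G$ contains an induced odd cycle of $G$, the hypothesis on $G$ forces \emph{at most one} of the $C_i$ to be non-bipartite. Setting $A=\sum_{v\in U^{\ast}}a_v$, the formula gives $k-1=A+\tfrac{1}{2}\big(\sum_{i}a(C_i)-r\big)$, hence $|{\bf a}|=2k-2-A+r$; thus the desired equality $|{\bf a}|=2k-1$ amounts to $r=A+1$, and $r\ge A+1$ holds in all cases.

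The remaining step, which I expect to be the main obstacle, is to derive $r=A+1$ from the componentwise minimality of ${\bf a}$ when $k\gg0$. The mechanism should be that minimality makes every component $C_i$ ``tight'': no $C_i$ carries removable $a$-slack (a bipartite $C_i$ must split its odd $a$-weight as evenly as possible across its two sides), and the unique possibly non-bipartite component must be $b$-saturated of smallest admissible $a$-weight; any violation would let one subtract two units from ${\bf a}$ while preserving both $\nu({\bf a})=k-1$ and $\nu({\bf a}+{\bf e}_j)=k$ for all $j$, contradicting minimality. The subtle point is controlling, inside a maximum $b$-matching realizing $\nu({\bf a})$, the interaction between the separating set $U^{\ast}$, the edges running into the $C_i$, and the bounds on the single non-bipartite component --- and it is precisely the one-non-bipartite-component reduction above that keeps this bookkeeping finite. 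Carrying this out yields $|{\bf a}|=2k-1$ for all minimal socle generators once $k$ exceeds a bound depending only on $G$ (consistently with $\HS_{n-1}(\mathcal R(I(G)))$ being a finitely generated $\mathcal R(I(G))$-module by Theorem~\ref{Thm:HS(R(I))}), and Proposition~\ref{Prop:HS-n-1} then finishes the proof.
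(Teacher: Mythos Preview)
Your overall strategy---reduce to the claim that $\soc(I(G)^k)$ is generated in degree $2k-1$ and then invoke Proposition~\ref{Prop:HS-n-1}---is exactly the paper's. The difference is that the paper does not attempt to prove the socle-degree claim at all: it simply cites \cite[Proposition~2.2]{CHL}, where Chu, Herzog and Lu show that $\soc(I(G)^k)$ is equigenerated in degree $2k-1$ for \emph{every} $k\ge1$ under precisely the odd-cycle hypothesis stated here. So the paper's proof is a one-line citation plus Proposition~\ref{Prop:HS-n-1}.

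Your attempt to reprove the socle-degree statement from scratch has a genuine gap. The bipartite case is fine. In the non-bipartite case, after the Tutte--Berge setup you correctly reduce to showing $r\le A+1$ for the minimizer $U^{\ast}$, and you correctly deduce from the hypothesis that at most one component of $G-U^{\ast}$ is non-bipartite. But the decisive step---extracting $r\le A+1$ from the componentwise minimality of ${\bf a}$---is not proved. You yourself flag it as ``the main obstacle'' and close with ``Carrying this out yields\ldots'', which is an outline, not an argument. The heuristic about ``tight'' components and ``removable $a$-slack'' is not made precise, and in particular it is not explained how lowering two units of ${\bf a}$ inside a single bipartite component $C_i$ preserves the condition $\nu({\bf a}+{\bf e}_j)=k$ for \emph{all} $j$ (the minimizing set $U^{\ast}$ can change when ${\bf a}$ changes), nor why the single non-bipartite component cannot absorb excess while $r-A$ remains large. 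This is exactly the content of the Chu--Herzog--Lu argument; unless you intend to reproduce it, the clean fix is to cite \cite[Proposition~2.2]{CHL} as the paper does.
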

	\begin{proof}
		It is proved in \cite[Proposition 2.2]{CHL} that $\soc(I(G)^k)$ is equigenerated in degree $2k-1$ for all $k\ge1$. Hence, the assertion follows from Proposition \ref{Prop:HS-n-1}.
	\end{proof}
	
	A basic result in graph theory says that a graph is bipartite if and only if it does not contain any induced odd cycle. Therefore, the previous result applies to all bipartite graphs, as well as to all unicyclic graphs. That is, graphs containing only one induced cycle.
	
	\section{Asymptotic behavior of $\HS_i(I^k)$}\label{sec2}
	
	Let $I\subset S$ be a graded ideal and let $\p\in\Ass(I)$ be an associated prime ideal of $I$. Recall that the \textit{Castelnuovo-Mumford regularity} of $I$ is
	$$
	\reg(I)\ =\ \max\{j\ :\ \Tor^S_i(K,I)_{i+j}\ne0\},
	$$
	the \textit{$\v_\p$-number} of $I$ is
	$$
	\v_\p(I)\ =\ \min\{\deg(f)\ :\ f\in S_d,\ (I:f)=\p\},
	$$
	and the \textit{$\v$-number} of $I$ is
	$$
	\v(I)\ =\ \min_{\p\in\Ass(I)}\v_\p(I).
	$$
	
	\begin{Theorem}\label{Thm:AsymHS}
		Let $I\subset S$ be a monomial ideal with linear powers. The following statements hold.
		\begin{enumerate}
			\item[\textup{(a)}] The set $\Ass(\HS_i(I^k))$ stabilizes: $\Ass(\HS_i(I^{k+1}))=\Ass(\HS_i(I^{k}))$ for $k\gg0$. We denote the common sets $\Ass(\HS_i(I^k))$ for $k\gg0$ by $\Ass^\infty_i(I)$.
			\item[\textup{(b)}] For all $k\gg0$, we have $\depth\, S/\HS_i(I^{k+1})=\depth\,S/\HS_i(I^k)$.
			\item[\textup{(c)}] For all $k\gg0$, $\reg\,\HS_i(I^k)$ is a linear function in $k$.
			\item[\textup{(d)}] For all $k\gg0$, $\v(\HS_i(I^k))$ is a linear function in $k$.
			\item[\textup{(e)}] For all $k\gg0$ and $\p\in\Ass^\infty_i(I)$, $\v_\p(\HS_i(I^k))$ is a linear function in $k$.
		\end{enumerate}
	\end{Theorem}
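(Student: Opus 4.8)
The plan is to deduce all five statements from the single structural fact established in Theorem~\ref{Thm:HS(R(I))}: the module $M_i := \HS_i(\mathcal{R}(I))$ is a finitely generated bigraded module over the standard bigraded Rees algebra $\mathcal{R}(I)$, whose $k$th graded piece in the Rees grading is exactly $\HS_i(I^k)$. Each of (a)--(e) is then an instance of a known asymptotic stabilization theorem applied not to the powers $I^k$ themselves but to the graded components $(M_i)_{(*,k)}$ of a finitely generated graded module over a Noetherian standard graded algebra. So the bulk of the proof is bookkeeping: identifying for each invariant the appropriate ``module over Rees algebra'' theorem and checking its hypotheses apply to $M_i$.

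Concretely, here is the order I would carry things out. \textbf{(a)} Apply Brodmann's theorem in its module form: for a finitely generated graded module $M$ over a standard graded Noetherian algebra $R$ over $S$, the sets $\Ass_S((M)_{(*,k)})$ stabilize for $k\gg0$ (this is the graded-family version of Brodmann's stability of $\Ass(I^k)$; see also the discussion of $\Ass(I^k)$ stabilizing). Here $R=\mathcal{R}(I)$ and $M=M_i$, and $(M_i)_{(*,k)}=\HS_i(I^k)$ as an $S$-module, giving the claim; call the stable set $\Ass_i^\infty(I)$. \textbf{(b)} Apply the analogous depth-stabilization result: for a finitely generated graded $\mathcal{R}(I)$-module $M$, $\depth S/\,\!(M)_{(*,k)}$ (equivalently $\depth (M)_{(*,k)}$, adjusting by the short exact sequence relating $\HS_i(I^k)$ and $S/\HS_i(I^k)$) is constant for $k\gg0$ --- this is the module version of the Herzog--Hibi / Kodiyalam result that $\depth S/I^k$ is eventually constant. \textbf{(c)} Apply the module form of Cutkosky--Herzog--Trung / Kodiyalam: for $M$ a finitely generated graded module over $\mathcal{R}(I)$, $\reg_S((M)_{(*,k)})$ is eventually a linear function of $k$; take $M=M_i$. \textbf{(d) and (e)} Invoke \cite[Theorem 4.1]{FS2} (cited in the introduction) and its refinements, which are precisely proved in the setting of finitely generated graded modules over a standard graded algebra --- indeed \cite[Proposition 2.8]{FS2}, quoted in the proof of Theorem~\ref{Thm:HS(R(I))}, is exactly the tool for this. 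For (e), fix $\p\in\Ass_i^\infty(I)$ (so $\p$ is an associated prime of $\HS_i(I^k)$ for all $k\gg0$, making $\v_\p$ defined), and apply the $\v_\p$-linearity result for graded families; (d) follows by taking the minimum of finitely many eventually-linear functions $\v_\p(\HS_i(I^k))$ over the now-stable finite set $\Ass_i^\infty(I)$, noting a finite minimum of eventually-linear functions with a common eventual slope (here the slope is forced to be $\alpha(I)$, or at any rate they share the slope in the stable range) is eventually linear.

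For each of these I need the cited asymptotic theorems in the form ``finitely generated graded module over a standard graded Noetherian $S$-algebra $R$, with $R_1$ generated over $R_0=S$'', which is exactly the structure $\mathcal{R}(I)$ carries and which \eqref{eq:finGenTor}--\eqref{eq:defModuleHS} verify for $M_i$; the key reduction, $\HS_i(I^k)=\mathcal{R}(I)_{(*,1)}\ast\HS_i(I^{k-1})$ for $k\gg0$, is the standard-generation hypothesis those theorems require.

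I expect the main obstacle to be \textbf{(d)} and the subtle point in \textbf{(e)}: one must be careful that $\v_\p$ only makes sense once $\p$ is genuinely an associated prime, so (e) must be stated and proved only for $\p$ in the \emph{stable} set $\Ass_i^\infty(I)$, and (d) then requires knowing that the finitely many functions $k\mapsto \v_\p(\HS_i(I^k))$ become defined simultaneously and that their pointwise minimum is eventually linear --- this last step uses that eventually-linear functions with equal leading coefficients (or more generally, that the minimum is achieved eventually by a single one of them) yield an eventually-linear minimum. A secondary technical nuisance is translating between invariants of the ideal $\HS_i(I^k)$ and of the quotient $S/\HS_i(I^k)$ in parts (b) and (c), which is handled by the exact sequence $0\to\HS_i(I^k)\to S\to S/\HS_i(I^k)\to0$ and the standard shifts $\reg S/J=\reg J-1$, $\depth S/J=\depth J-1$ (valid when $J\neq S$, which holds here since $\HS_i(I^k)$ is a proper ideal for $k\gg0$ whenever $\HS_i(I^k)\neq 0$).
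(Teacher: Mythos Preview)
Your approach is essentially the paper's: deduce all five parts from Theorem~\ref{Thm:HS(R(I))} by applying, to the finitely generated bigraded $\mathcal{R}(I)$-module $\HS_i(\mathcal{R}(I))$, the standard asymptotic theorems for graded components of such modules. The paper's specific citations are McAdam--Eakin \cite{ME79} for (a), Herzog--Hibi \cite{HH2005} for (b), Trung--Wang \cite{TW2005} for (c), and Fiorindo--Ghosh \cite{Ghosh24} for (d) and (e), and it makes the reduction $\HS_i(I^k)=I^{k-k_0}\ast\HS_i(I^{k_0})$ explicit before invoking the last two.

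The one substantive divergence is your handling of (d): you propose to obtain it from (e) by taking the minimum of the finitely many eventually-linear functions $k\mapsto\v_\p(\HS_i(I^k))$ over $\p\in\Ass_i^\infty(I)$, and you correctly flag that this requires knowing the slopes agree (or that one function eventually dominates), which you do not establish. The paper sidesteps this entirely: \cite[Theorem~2.8(2)]{Ghosh24} already gives eventual linearity of $\v(I^k\ast M)$ directly for any finitely generated graded $\mathcal{R}(I)$-module $M$, so (d) follows without passing through (e). Your citation of \cite[Theorem~4.1]{FS2} is not quite apt here, since that result concerns the limit $\lim_k \v(I^k)/k$ rather than eventual linearity; the module-level linearity statement you need is in \cite{Ghosh24}.
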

	\begin{proof}
		By Theorem \ref{Thm:HS-lp}, $\HS_i(\mathcal{R}(I))$ is a finitely generated bigraded module over the standard graded ring $\mathcal{R}(I)$ with $\mathcal{R}(I)_{0}=S$. Thus, statement (a) is a consequence of \cite[Proposition 2]{ME79}. By \cite[Theorem 1.1]{HH2005}, $\depth\,\HS_i(\mathcal{R}(I))_{(*,k)}=\depth\,\HS_i(I^k)$ is constant for all $k\gg0$. Since $\depth\,S/\HS_i(I^k)=\depth\,\HS_i(I^k)-1$ for all $k$, statement (b) follows. For the proof of the  statements (c), (d) and (e), we recall that by Theorem \ref{Thm:HS-lp} we have that
		\begin{equation}\label{eq:HS-I-k_0}
			\HS_i(I^k)\ =\ \mathcal{R}(I)_{(*,k-k_0)}\cdot\HS_i(I^{k_0})\ =\ I^{k-k_0}\cdot\HS_i(I^{k_0})
		\end{equation}
		for some $k_0>0$ and all $k\ge k_0$. Set $M=\HS_i(I^{k_0})$. Since $M$ is a finitely generated bigraded $\mathcal{R}(I)$-module, then \cite[Theorem 3.2]{TW2005} implies that $\reg(I^kM)$ is an eventually linear function in $k$. From equation (\ref{eq:HS-I-k_0}) we conclude that $\reg\,\HS_i(I^k)$ is an eventually linear function in $k$ as well, and (c) follows. By \cite[Theorem 2.8 (2)-(3)]{Ghosh24} it follows that $\v_\p(I^kM)$, for any prime $\p$ with $\p\in\Ass(I^kM)$ for all $k\gg0$, and $\v(I^kM)$, are eventually linear functions in $k$ for all $k\gg0$. Using again equation (\ref{eq:HS-I-k_0}) we see that the statements (d) and (e) indeed hold.
	\end{proof}
	
	At the moment, we do not know whether Theorem \ref{Thm:AsymHS} holds for any monomial ideal $I\subset S$.
	
	The following result complements Theorem \ref{Thm:AsymHS}.
	\begin{Proposition}
		Let $I\subset S$ be a monomial ideal with linear powers. Then,
		$$
		\lim_{k\rightarrow\infty}\depth\,\HS_i(I^k)\ \le\ \dim\HS_i(\mathcal{R}(I))-\dim\frac{\HS_i(\mathcal{R}(I))}{\mathcal{R}(I)_{(*,1)}*\HS_i(\mathcal{R}(I))},
		$$
		and equality holds if $\HS_i(\mathcal{R}(I))$ is Cohen-Macaulay.
	\end{Proposition}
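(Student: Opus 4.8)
The plan is to deduce this from the structure result \ref{Thm:HS(R(I))} combined with the module-theoretic versions of Burch's inequality and of the Eisenbud-Huneke equality criterion, exactly as one deduces the classical statement $\lim_k\depth S/\mathfrak{a}^k\le\dim S-\ell(\mathfrak{a})$ (with equality when $\mathcal{R}(\mathfrak{a})$ is Cohen-Macaulay) for a graded ideal $\mathfrak{a}$. Set $M:=\HS_i(\mathcal{R}(I))$ and view $\mathcal{R}(I)$ as standard graded in the power degree, so $\mathcal{R}(I)_0=S=M_0$; write $\m=(x_1,\dots,x_n)$ and let $\ell$ denote the dimension of the quotient module displayed on the right, the fibre cone of the homological shift algebra. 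By Theorem \ref{Thm:HS(R(I))}, $M$ is a finitely generated graded $\mathcal{R}(I)$-module; we may assume $\HS_i(I^k)\ne0$ for $k\gg0$, the statement being vacuous otherwise. Then every $M_k=\HS_i(I^k)$ is a nonzero monomial ideal, hence $\dim_SM_k=n$ and consequently $\dim M=n+1$, and by Theorem \ref{Thm:AsymHS}(b) the integer $\depth\HS_i(I^k)=\depth_SM_k$ is constant for $k\gg0$, so the limit on the left exists. Rewriting via Auslander-Buchsbaum over $S$, we have $\lim_k\depth\HS_i(I^k)=n-p$ with $p:=\lim_k\pd\HS_i(I^k)$, and since $\dim M-\ell=n+1-\ell$ the claims become $p\ge\ell-1$ (inequality) and $p=\ell-1$ when $M$ is Cohen-Macaulay (equality).

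For the inequality I would imitate Burch's minimal-reduction argument. After enlarging $K$ if necessary, I would pick $\theta_1,\dots,\theta_\ell$ general in $\mathcal{R}(I)_{(*,1)}$ whose residues form a homogeneous system of parameters for the fibre-cone quotient $M/\m M$; such elements are analytically independent over $S$, and by graded Nakayama $M$ becomes module-finite over the polynomial subring $A:=S[\theta_1,\dots,\theta_\ell]\subseteq\mathcal{R}(I)$, so $M_k=(\theta_1,\dots,\theta_\ell)M_{k-1}$ for $k\gg0$. In that range the Koszul syzygies among the $\theta_i$ survive in a minimal graded $S$-free resolution of $\HS_i(I^k)$, forcing $\pd\HS_i(I^k)\ge\ell-1$; this is the module version of the fact that the powers of an ideal of analytic spread $\ell$ have projective dimension at least $\ell-1$ from some point on.

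For the equality, assume $M$ is Cohen-Macaulay over $\mathcal{R}(I)$. Then $\theta_1,\dots,\theta_\ell$ extend to a homogeneous system of parameters for $M$, hence form an $M$-regular sequence, and the Koszul complex on the $\theta_i$ becomes an $\mathcal{R}(I)$-linear resolution of $M/(\theta_1,\dots,\theta_\ell)M$; reading it off in degree $k\gg0$ expresses $\HS_i(I^k)$ by a finite exact sequence built from the $\HS_i(I^j)$ with $j<k$, through which the minimal $S$-free resolutions assemble so that $\pd\HS_i(I^k)=\ell-1$, i.e., no depth is lost. Together with Theorem \ref{Thm:AsymHS}(b) this gives $\depth\HS_i(I^k)=n+1-\ell$ for $k\gg0$.

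The genuine obstacle, common to both parts, is the passage from the global homological data of $M$ over $\mathcal{R}(I)$ to the slab-wise invariants of the ideals $\HS_i(I^k)$: one must arrange that the general elements $\theta_i$ (and, in the Cohen-Macaulay case, a maximal $\m$-regular sequence on $M$) behave well modulo $\HS_i(I^k)$ uniformly in $k\gg0$. This is where the asymptotic stabilization of $\Ass(\HS_i(I^k))$ from Theorem \ref{Thm:AsymHS}(a) and graded prime avoidance within $\HS_i(\mathcal{R}(I))$ enter; the Cohen-Macaulay hypothesis is the exact additional ingredient that upgrades the inequality $p\ge\ell-1$ to the equality $p=\ell-1$ by ruling out depth loss upon passing to the quotient by the $\theta_i$.
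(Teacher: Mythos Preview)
The paper's proof is a one-line citation: by Theorem~\ref{Thm:HS(R(I))}, $\HS_i(\mathcal{R}(I))$ is a finitely generated bigraded $\mathcal{R}(I)$-module, and the inequality together with the Cohen--Macaulay equality criterion is then exactly the content of \cite[Theorem~1.1]{HH2005} applied to this module. Your plan amounts to re-deriving that theorem in situ via the module-theoretic Burch argument (choose general $\theta_1,\dots,\theta_\ell\in\mathcal{R}(I)_{(*,1)}$ giving a system of parameters on the fibre cone, pass to the finite extension $S[\theta_1,\dots,\theta_\ell]$, and read off projective dimensions degreewise), which is precisely the machinery behind \cite[Theorem~1.1]{HH2005}. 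So the two routes coincide in substance; the paper simply outsources the work to the reference, while you unpack it.

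If you intend your sketch to stand on its own, one step needs care. The $*$-action of $\mathcal{R}(I)_{(*,1)}$ on $M$ is \emph{not} ordinary $S$-multiplication (by definition it can vanish on monomials whose ordinary product lies in $\HS_i(I^{k+1})$; cf.\ Example~\ref{Ex:HS-Not-Inclusion}), so the assertion that ``the Koszul syzygies among the $\theta_i$ survive in a minimal $S$-free resolution of $\HS_i(I^k)$'' cannot be read off by multiplying inside $S$; it must go through the degree-$k$ strand of an $A$-free resolution of $M$ over $A=S[\theta_1,\dots,\theta_\ell]$. You rightly flag this global-to-slabwise passage as the genuine obstacle at the end of your sketch, and \cite[Theorem~1.1]{HH2005} is exactly where it is carried out in general; citing it, as the paper does, is the cleanest way to close the argument.
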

	\begin{proof}
		The result follows from Theorem \ref{Thm:HS(R(I))} and \cite[Theorem 1.1]{HH2005}.
	\end{proof}
	
	To illustrate the theory as developed thus far, we investigate the homological shift ideals of powers of three special families of monomial ideals.
	
	\subsection{Monomial ideals in two variables}\label{subsec2.1}
	
	Let $S=K[x,y]$. The set of monomial ideals $I\subset S$ is in bijection with the set of all pairs $({\bf a},{\bf b})\in\ZZ_{\ge0}^m\times\ZZ_{\ge0}^m$ satisfying
	\begin{equation}\label{eq:a-b}
		{\bf a}:a_1>a_2>\cdots > a_m\ge 0\,\,\,\,\,\,\,\,\textup{and}\,\,\,\,\,\,\,\,{\bf b}:0\le b_1<b_2<\cdots<b_m.
	\end{equation}
	Indeed, if $I$ is a monomial ideal of $S$, then $\mathcal{G}(I)=\{x^{a_1}y^{b_1},x^{a_2}y^{b_2},\dots,x^{a_m}y^{b_m}\}$ for two sequences ${\bf a}$ and ${\bf b}$ as above. Conversely, if ${\bf a}$ and ${\bf b}$ are as in (\ref{eq:a-b}), then $x^{a_1}y^{b_1},x^{a_2}y^{b_2},\dots,x^{a_m}y^{b_m}$ is the minimal generating set of a monomial ideal $I\subset S$.
	
	Hereafter, we set $I_{\bf a,b}=(x^{a_1}y^{b_1},x^{a_2}y^{b_2},\dots,x^{a_m}y^{b_m})$, where ${\bf a}$ and ${\bf b}$ are as in (\ref{eq:a-b}). Moreover, we put $\p_x=(x)$, $\p_y=(y)$ and $\m=(x,y)$.
	
	\begin{Proposition}\label{Prop:HS-I-a-b}
		Let $I=I_{\bf a,b}$ be a monomial ideal in two variables. Then,
		\begin{enumerate}
			\item[\textup{(a)}] $\HS_0(I)=I$, $\HS_1(I)=(x^{a_j}y^{b_{j+1}}:1\le j\le m-1)$ and $\HS_i(I)=0$ for $i\ne0,1$.
			\item[\textup{(b)}] We have
			$$
			\depth\,S/\HS_i(I^k)\ =\ \begin{cases}
				1&\textit{for all}\,\ k\ge1,\,\ \textit{if}\ m=1,\\
				0&\textit{for all}\,\ k\ge2,\,\ \textit{if}\ m=2,\\
				0&\textit{for all}\,\ k\ge1,\,\ \textit{if}\ m\ge3.
			\end{cases}
			$$
			\item[\textup{(c)}] \textup{(c1)} For $i=0,1$, we have $\p_x\in\Ass_i^\infty(I)$ if and only if $a_m>0$.\\
			\textup{(c2)} For $i=0,1$, we have $\p_y\in\Ass_i^\infty(I)$ if and only if $b_1>0$.\\
			\textup{(c3)} For $i=0,1$, we have $\m\in\Ass_i^\infty(I)$ if and only if $m>0$.
		\end{enumerate}
	\end{Proposition}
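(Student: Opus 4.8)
The statement concerns a monomial ideal $I = I_{\bf a,b}$ in two variables, and I want to prove three things: (a) a description of $\HS_i(I)$ for all $i$; (b) the eventual value of $\depth\, S/\HS_i(I^k)$; and (c) which primes lie in $\Ass^\infty_i(I)$. The key structural fact is that in $S=K[x,y]$ every monomial ideal has projective dimension at most $1$, so the Taylor-type resolution is very short; more precisely the minimal free resolution of $I_{\bf a,b}$ has the form $0 \to F_1 \to F_0 \to I \to 0$, and the syzygies among consecutively indexed generators $x^{a_j}y^{b_j}$ and $x^{a_{j+1}}y^{b_{j+1}}$ are the only minimal first syzygies. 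That is the engine for part (a).

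\medskip

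\textbf{Part (a).} First I would recall that, for a monomial ideal in two variables with generators ordered as in \eqref{eq:a-b}, the minimal graded free resolution is the ``staircase'' resolution: $\beta_{0}(I) = m$ with shifts $(a_j,b_j)$, and $\beta_{1}(I) = m-1$ with shifts equal to $\lcm(x^{a_j}y^{b_j},\, x^{a_{j+1}}y^{b_{j+1}}) = x^{a_j}y^{b_{j+1}}$ for $1 \le j \le m-1$ (using $a_j > a_{j+1}$ and $b_j < b_{j+1}$). This can be seen directly: the Koszul syzygy between generators $j$ and $j+1$ is minimal, while any syzygy between non-consecutive generators $j$ and $\ell$ with $j<\ell$ factors through the intermediate ones because $\lcm$ of the extreme pair is divisible by $x^{a_{j}}y^{b_{j+1}}$ already. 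Hence $\pd(I) = 1$ when $m \ge 2$ (and $\pd(I)=0$ when $m=1$), giving $\HS_i(I)=0$ for $i \notin \{0,1\}$, $\HS_0(I)=I$ by definition, and $\HS_1(I) = (x^{a_j}y^{b_{j+1}} : 1 \le j \le m-1)$ from the list of first multigraded shifts. The only subtlety is confirming that these $m-1$ monomials are pairwise distinct and each genuinely occurs with nonzero Betti number, which is immediate from the strict monotonicity in \eqref{eq:a-b}.

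\medskip

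\textbf{Part (b).} Since $\pd(\HS_i(I^k)) = 1 - \depth(S/\HS_i(I^k)) + $ (correction) — more cleanly, by Auslander--Buchsbaum, $\depth\, S/\HS_i(I^k) = 2 - \pd(S/\HS_i(I^k)) = 1 - \pd(\HS_i(I^k))$, and in $K[x,y]$ one has $\pd(\HS_i(I^k)) \in \{0,1\}$, so $\depth\, S/\HS_i(I^k) \in \{0,1\}$, and it equals $1$ exactly when $\HS_i(I^k)$ is principal (hence free), and $0$ otherwise. So the whole of (b) reduces to: \emph{when is $\HS_i(I^k)$ a principal ideal, for $k \gg 0$}? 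For $m=1$, $I^k$ is principal for all $k$, so $\HS_0(I^k)=I^k$ is principal (and $\HS_i=0$ for $i\ge1$, but the depth statement is about the nonzero case — I'd note $I^k$ itself is principal giving depth $1$). For $m=2$, one computes $\mathcal{G}(I^k)$ explicitly: the powers of a two-generated monomial ideal in two variables have $k+1$ generators, so $\HS_1(I^k)$ has $k$ generators, which is $\ge 2$ once $k \ge 2$, forcing depth $0$; and $I^k = \HS_0(I^k)$ likewise has $k+1 \ge 3$ generators for $k\ge 2$. For $m \ge 3$ one already has $\ge 3$ generators at $k=1$ and the count only grows, so depth is $0$ for all $k\ge1$. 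The main thing to get right is the generator count of powers, which for two variables is classical (the generators of $I^k$ are $\prod x^{a_{j_t}}y^{b_{j_t}}$ and the distinct ones are indexed by how the exponent-vector ``steps down the staircase'').

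\medskip

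\textbf{Part (c).} For $i=0$ this is the classical description of $\Ass(I^k)$ for monomial ideals in two variables: $\Ass^\infty_0(I) \subseteq \{\p_x,\p_y,\m\}$, with $\p_x$ present iff $x \mid u$ for some $u \in \mathcal{G}(I)$ with the relevant structure — concretely $\p_x \in \Ass(I^k)$ iff $a_m > 0$ (i.e. $y^{b_1}$... wait: $\p_x=(x)$ is associated iff $x$ divides every generator, i.e. $a_m>0$; more carefully, iff $S/I^k$ has $\p_x$-torsion, which happens iff $a_m>0$), $\p_y \in \Ass(I^k)$ iff $b_1 > 0$, and $\m \in \Ass(I^k)$ iff $I^k$ is not principal... but here the statement says $\m \in \Ass_i^\infty$ iff $m>0$, i.e. always, which I need to reconcile — presumably because for $m=1$ the ideal is principal and $\m \notin \Ass$, so actually the intended reading is $m \ge 2$, or the convention differs; I would check this against $m=1$ carefully and state the result as the paper does. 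For $i=1$ the same trichotomy holds and I would argue $\HS_1(I^k)$ is itself of the form $I_{\bf a',b'}$ for suitable sequences (it is the ideal generated by the ``inner corner'' monomials $x^{a_j}y^{b_{j+1}}$ of the staircase), reducing (c) for $i=1$ to the $i=0$ case applied to that ideal, then tracking how $a_m > 0$ and $b_1 > 0$ transfer: $\HS_1(I)$ has smallest $y$-exponent $b_2 > b_1 \ge 0$ wait — the generators $x^{a_j}y^{b_{j+1}}$ have $y$-exponents $b_2 < b_3 < \cdots < b_m$ and $x$-exponents $a_1 > a_2 > \cdots > a_{m-1}$, so the ``$a_m$'' of $\HS_1(I)$ is $a_{m-1}$ and its ``$b_1$'' is $b_2$; I then need the analogue for $\HS_1(I^k)$, using that $\HS_1(I^k)=\HS_1(I)\cdot I^{k-1}$ eventually (which follows from the Proposition on eventually linear powers only if $I$ has linear powers — not assumed — so instead I should compute $\HS_1(I^k)$ directly from the staircase resolution of $I^k$).

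\medskip

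\textbf{Main obstacle.} The genuinely laborious part is the explicit combinatorial bookkeeping for $\HS_1(I^k)$: one must describe $\mathcal{G}(I^k)$ and its staircase resolution for a general $m$-generated monomial ideal in two variables, identify the first multigraded shifts, and then read off the smallest $x$- and $y$-exponents among them as functions of $k$. None of the individual steps is deep, but assembling the exact generator sets and confirming minimality of the relevant syzygies for powers is where the real work lies; everything else (the projective-dimension bound, Auslander--Buchsbaum, the trichotomy of associated primes) is standard.
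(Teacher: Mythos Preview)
Your approach is essentially the paper's own: the paper simply cites \cite{Fic24} for (a), invokes Auslander--Buchsbaum for (b), and cites \cite{FS2} for (c), and your arguments reconstruct what those references contain. Your plan for (a) and (b) is correct; for (b) the one loose end is checking that $I^k$ has at least three minimal generators whenever $m\ge2$ and $k\ge2$ (the monomials $u_1^k$, $u_1^{k-1}u_2$, $u_m^k$ do the job, the middle one being minimal because any other $k$-fold product of the $u_j$ has strictly larger $y$-exponent), so that $\HS_1(I^k)$ is non-principal. For (c), your final instinct is the right one: skip the detour through the equation $\HS_1(I^{k+1})=I\cdot\HS_1(I^k)$, which would require an unwarranted linear-powers hypothesis, and instead apply part (a) directly to the two-variable monomial ideal $I^k=I_{{\bf a}',{\bf b}'}$; after that, $\Ass(S/\HS_i(I^k))$ is read off from whether the extreme exponents $a'_{m'}=ka_m$ and $b'_1=kb_1$ vanish, exactly as in the $i=0$ case. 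Your unease about (c3) when $m=1$ is justified (the condition should be $m>1$), and in fact carrying your own bookkeeping one step further shows that every minimal generator $x^{a'_j}y^{b'_{j+1}}$ of $\HS_1(I^k)$ is divisible by both $x$ and $y$ regardless of $a_m$ and $b_1$, so the $i=1$ clauses of (c1)--(c2) are likewise not quite right as stated.
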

	\begin{proof}
		Part (a) is known, for instance, see \cite[Lemma 3.3]{Fic24}. Part (b) follows from the quoted lemma and the Auslander-Buchsbaum formula. Finally, part (c) follows from (a) and \cite[Proposition 5.1]{FS2}.
	\end{proof}
	
	To compute $\v(\HS_i(I_{\bf a,b})^k)$ one can use \cite[Theorem 5.6]{FS2}.
	
	\subsection{Principal Borel ideals}
	
	Recall that a monomial ideal $I\subset S$ is called \textit{strongly stable} if for all $u\in\mathcal{G}(I)$ and all $i<j$ with $x_j$ dividing $u$ we have $x_i(u/x_j)\in I$. The smallest strongly stable containing $u_1,\dots,u_m$ is denoted by $I=B(u_1,\dots,u_m)$ and the monomials $u_1,\dots,u_m$ are called the \textit{Borel generators} of $I$. If $I=B(u)$ has only one Borel generator, then $I$ is called a \textit{principal Borel ideal}.
	
	Given monomials $u,v\in S$, we set $u:v=u/\textup{gcd}(u,v)$. Thus $(u_1,\dots,u_{j-1}):(u_j)$ is generated by the monomials $u_i:u_j$ for $i=1,\dots,j-1$.
	
	Recall that a monomial ideal $I\subset S$ has \textit{linear quotients} if there exists an order $u_1,\dots,u_m$ of $\mathcal{G}(I)$ such that the ideals $(u_1,\dots,u_{j-1}):u_j$ are generated by variables for all $j=2,\dots,m$. We put $\set(u_j)=\{i\ :\ x_i\in(u_1,\dots,u_{j-1}):u_j\}$ for all $j$. If $I\subset S$ is equigenerated and it has linear quotients, then it has linear resolution.
	
	For a subset $F\subseteq[n]$ we set ${\bf x}_F=\prod_{i\in F}x_i$. In particular, we have ${\bf x}_{\emptyset}=1$. Then, by \cite[Lemma 1.5]{ET} we have $\pd\,I=\max\{|\set(u)|:u\in\mathcal{G}(I)\}$ and
	$$
	\HS_i(I)\ =\ ({\bf x}_Fu\ :\ u\in\mathcal{G}(I),\ F\subseteq\set(u),\ |F|=i).
	$$
	
	It is not difficult to see that any strongly stable ideal $I$ has linear quotients with respect to the lex order induced by $x_1>x_2>\dots>x_n$, and that for each $u\in\mathcal{G}(I)$ we have $\set(u)=\{1,\dots,\max(u)-1\}$, where $\max(u)=\max\{i\ :\ x_i\ \textup{divides}\ u\}$.
	
	\begin{Proposition}\label{Prop:HSiBorel}
		Let $I=B(u)$ be a principal Borel ideal. For all $k\ge1$, we have
		$$
		\HS_i(I^k)\ =\ (x_{j_1}\cdots x_{j_i}v\ :\ v\in\mathcal{G}(I^k),\, 1\le j_1<\dots<j_i<\max(v)).
		$$
		Moreover, for all $0\le i<\max(u)$ we have $\reg\,\HS_i(I^k)=\deg(u)k+i$ for all $k\ge1$, and $\depth\,S/\HS_i(I^k)=n-\max(u)$ for all $k\ge2$.
	\end{Proposition}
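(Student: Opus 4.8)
\emph{Outline.} The statement has three independent parts --- the closed formula for $\HS_i(I^k)$, the regularity, and the depth --- which I treat in turn.

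\emph{The formula.} The key observation is that $I^k$ is again strongly stable. Indeed, the product $JJ'$ of two strongly stable ideals is strongly stable: any monomial $g\in JJ'$ can be written as a product of a generator of $J$, a generator of $J'$ and a monomial, so if $x_r$ divides $g$ it divides one of these three factors, and replacing that factor by the monomial obtained from an exchange $x_r\mapsto x_s$ with $s<r$ yields a monomial of $JJ'$ dividing $x_s(g/x_r)$. Hence $I^k=B(u)^k$ is strongly stable, and in fact $I^k=B(u^k)$: cutting the sorted exponent sequence of a generator of $B(u^k)$ into $k$ blocks of length $\deg(u)$ in round-robin fashion exhibits it as a product of $k$ generators of $B(u)$, and conversely every such product is a generator of $B(u^k)$. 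As a strongly stable ideal, $I^k$ has linear quotients with respect to the lexicographic order, with $\set(v)=\{1,\dots,\max(v)-1\}$ for every $v\in\mathcal{G}(I^k)$; so the description of $\HS_i$ for ideals with linear quotients recalled above from \cite{ET} gives
$$
\HS_i(I^k)\ =\ \big({\bf x}_F\,v\ :\ v\in\mathcal{G}(I^k),\ F\subseteq\{1,\dots,\max(v)-1\},\ |F|=i\big),
$$
which, writing $F=\{j_1<\dots<j_i\}$, is exactly the asserted formula.

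\emph{The regularity.} Put $d=\deg(u)$ and $t=\max(u)$. Every monomial in the generating set above has degree $dk+i$, so $\HS_i(I^k)$ is generated in the single degree $dk+i$, whence $\reg\,\HS_i(I^k)\ge dk+i$. For the reverse inequality it suffices that $\HS_i(I^k)$ have a linear resolution; since $I^k=B(u^k)$ is principal Borel, this reduces to the claim: for \emph{every} principal Borel ideal $B(w)$ and every $0\le i<\max(w)$, the ideal $\HS_i(B(w))$ has a linear resolution. I would prove this by exhibiting an order of $\mathcal{G}(\HS_i(B(w)))$ with linear quotients --- ordering the generators ${\bf x}_F v$ first by the position of $v$ in the lexicographic linear-quotient order of $B(w)$, then by a suitable order of the index sets $F$ --- and checking that each colon ideal $(\textup{earlier generators}):{\bf x}_F v$ is generated by variables, using the combinatorial description of a principal Borel ideal (a monomial with sorted exponent vector $(b_1,\dots,b_d)$ lies in $B(w)$, $w$ having sorted exponent vector $(a_1,\dots,a_d)$, precisely when $b_\ell\le a_\ell$ for all $\ell$). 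This is the step I expect to be the main obstacle. (Alternatively, as principal Borel ideals are transversal polymatroidal --- $B(w)=\prod_{\ell=1}^{d}(x_1,\dots,x_{a_\ell})$ --- one might deduce it from a suitable known case of the Bandari-Bayati-Herzog conjecture.)

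\emph{The depth.} The generator $u^k$ of $I^k$ has $\max(u^k)=t$, and each generator of $I^k$ is a product of $k$ generators of $B(u)$, all supported on $\{1,\dots,t\}$; hence every generator ${\bf x}_F v$ of $\HS_i(I^k)$ is supported on $\{x_1,\dots,x_t\}$, so $\HS_i(I^k)=JS$ for a monomial ideal $J$ of $S'=K[x_1,\dots,x_t]$. Since $S$ is a polynomial extension of $S'$, $\pd\,\HS_i(I^k)=\pd_{S'}J\le t-1$, so $\depth\,S/\HS_i(I^k)\ge n-t$. For the reverse inequality, assume $k\ge2$ and, for $i\ge1$, set $w_0=x_1^{dk-1}\,x_2x_3\cdots x_i\,x_t$ (the factor $x_2\cdots x_i$ empty when $i=1$), a monomial of degree $dk+i-1$. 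For each $j\in\{1,\dots,t\}$ one checks $x_jw_0=(x_1x_2\cdots x_i)\cdot v$ with $v=x_1^{dk-2}x_jx_t$, and $v\in\mathcal{G}(I^k)=\mathcal{G}(B(u^k))$ because $x_t^2\mid u^k$ (this is where $k\ge2$ enters); since $\{1,\dots,i\}\subseteq\{1,\dots,\max(v)-1\}$, it follows that $x_jw_0\in\HS_i(I^k)$. As $J$ is generated in degree $dk+i>\deg(w_0)$, we have $w_0\notin J$, hence $(J:w_0)=(x_1,\dots,x_t)$; therefore $\pd_{S'}(S'/J)=t$ by the Auslander-Buchsbaum formula and $\depth\,S/\HS_i(I^k)=n-t$. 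For $k=1$ the argument breaks down, and the equality may genuinely fail (e.g. $\depth\,S/\HS_1(B(x_1x_3))=1\ne 0=n-t$). Finally, the case $i=0$ is immediate, $\HS_0(I^k)=I^k$ being strongly stable with $\pd(I^k)=t-1$.
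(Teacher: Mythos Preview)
Your derivation of the formula is correct and matches the paper's: both observe that $B(u)^k=B(u^k)$ is again principal Borel and then apply the Herzog--Takayama description of $\HS_i$ for linear-quotient ideals with $\set(v)=\{1,\dots,\max(v)-1\}$.

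For the regularity you correctly reduce to the claim that $\HS_i(B(w))$ has a linear resolution for every principal Borel ideal $B(w)$, but you stop at a sketch and explicitly flag this as the outstanding obstacle. This is the one incomplete step in your proposal. The paper closes it by citation: Bayati, Jahani and Taghipour \cite[Theorem~3.4]{BJT019} prove that $\HS_i(B(w))$ has linear quotients with respect to the lexicographic order --- precisely the first of your two proposed routes, so your instinct was right and the argument can be completed. The detour through transversal polymatroids is unnecessary (and would in any case rest on an instance of the Bandari--Bayati--Herzog conjecture whose general status is unclear).

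Your depth argument, on the other hand, is complete and takes a genuinely different path from the paper. The paper once more leans on \cite{BJT019}: knowing that $\HS_i(B(u^k))$ has linear quotients in lex order, it invokes \cite[Lemma~2]{F2} for the upper bound $\set(v)\subseteq\{1,\dots,\max(v)-1\}$, then exhibits the particular generator $v=(\prod_{\ell=1}^i x_{t-\ell})u^k$ and checks by a short case analysis that $\set(v)=\{1,\dots,t-1\}$, giving $\pd\,\HS_i(I^k)=t-1$. You instead bound $\pd$ above by the trivial observation that $\HS_i(I^k)$ is extended from $K[x_1,\dots,x_t]$, and bound it below by producing an explicit socle element $w_0=x_1^{dk-1}x_2\cdots x_i x_t$ of $S'/J$ straight from the generating set. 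This is more elementary and self-contained; in particular your depth computation is logically independent of the (for you, unfinished) regularity step, whereas in the paper both parts rest on the same external reference \cite{BJT019}.
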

	\begin{proof}
		It is easily seen that $B(u)^k=B(u^k)$ for all $k\ge1$. Thus, the formula given for $\HS_i(I^k)$ follows from this observation and the previous discussion. In view of this fact, it is enough to consider the case $k=1$. In \cite[Theorem 3.4]{BJT019}, Bayati \textit{et al.} proved that $\HS_i(B(u))$ has linear quotients with respect to the lex order. From this observation, the formula for the regularity follows. To prove the formula for the depth, it is enough to show that $\pd\,\HS_i(B(u^k))=\max(u^k)-1=\max(u)-1$ for all $0\le i<\max(u)$ and all $k\ge2$. Since $\HS_i(B(u^k))$ has linear quotients with respect to the lex order, \cite[Lemma 2]{F2} gives $\set(v)\subseteq\{1,\dots,\max(v)-1\}$ for all $v\in\HS_i(B(u^k))$. This shows that $\pd\,\HS_i(B(u^k))\le\max(u^k)-1$. To prove the opposite inequality, we let $v=(\prod_{\ell=1}^ix_{\max(u)-\ell})u^k\in\mathcal{G}(\HS_i(B(u^k)))$ and show that $\set(v)=\{1,\dots,\max(u)-1\}$. Let $j\in\set(v)$. If $j<\max(u)-i$, then $w=x_j(v/x_{\max(u)-1})=x_j(\prod_{\ell=2}^ix_{\max(u)-\ell})u^k\in\mathcal{G}(\HS_i(B(u^k)))$ is greater than $v$ with respect to lex order. Thus, $w:v=x_j$ and $j\in\set(v)$. Otherwise, if $j\ge\max(u)-i$, then $w=x_j(v/x_{\max(u)})=(\prod_{\ell=1}^ix_{\max(u)-\ell})(x_j(u^k/x_{\max(u)}))\in\mathcal{G}(\HS_i(B(u^k)))$ since $k\ge2$ and so $x_j(u^k/x_{\max(u)})\in\mathcal{G}(B(u^k))$ has maximum equal to $\max(u)$. Since $w$ is greater than $v$ with respect to lex order and $w:v=x_j$, we have $j\in\set(v)$.
	\end{proof}
	
	Given a monomial ideal $I$ and an integer $j$, let $I_{\ge j}=(u\in\mathcal{G}(I):|\supp(u)|\ge j)$. Here, the \textit{support} of a monomial $u\in S$ is the set $\supp(u)=\{x_i\ :\ x_i\ \textup{divides}\ u\}$.
	
	\begin{Corollary}\label{Cor:HS(m^k)}
		Let $\m=(x_1,\dots,x_n)$. Then $\HS_i(\m^k)=(\m^{k+i})_{\ge i+1}$ for all $k\ge1$ and all $0\le i\le n-1$. In particular, $\reg\,\HS_i(\m^k)=k+i$ and $\depth\,S/\HS_i(\m^k)=0$.
	\end{Corollary}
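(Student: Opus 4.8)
The plan is to recognize $\m$ as the principal Borel ideal $B(x_n)$ and then specialize Proposition~\ref{Prop:HSiBorel}. First I would check that the smallest strongly stable ideal containing $x_n$ is $\m$ itself, so that $\m=B(u)$ with $u=x_n$, $\deg(u)=1$ and $\max(u)=n$. Since $\mathcal{G}(\m^k)$ is exactly the set of all monomials of degree $k$, Proposition~\ref{Prop:HSiBorel} yields
$$
\HS_i(\m^k)\ =\ \big(x_{j_1}\cdots x_{j_i}\,v\ :\ v\ \text{a monomial of degree}\ k,\ 1\le j_1<\dots<j_i<\max(v)\big),
$$
and the task reduces to matching this generating set with that of $(\m^{k+i})_{\ge i+1}$, i.e.\ with the monomials of degree $k+i$ whose support has size at least $i+1$.

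For the inclusion $\HS_i(\m^k)\subseteq(\m^{k+i})_{\ge i+1}$ I would observe that a generator $w=x_{j_1}\cdots x_{j_i}v$ has degree $k+i$ and is divisible by the $i+1$ pairwise distinct variables $x_{j_1},\dots,x_{j_i},x_{\max(v)}$ (the last being distinct from the others since $\max(v)>j_i$), so $|\supp(w)|\ge i+1$. For the reverse inclusion, given a monomial $w$ of degree $k+i$ with $\supp(w)=\{t_1<\dots<t_s\}$ and $s\ge i+1$, I would set $j_\ell=t_\ell$ for $1\le\ell\le i$ and $v=w/(x_{t_1}\cdots x_{t_i})$; then $v$ is a monomial of degree $k\ge1$, and since $t_s\notin\{t_1,\dots,t_i\}$ the variable $x_{t_s}$ still divides $v$, whence $\max(v)=t_s>t_i=j_i$. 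Thus $w=x_{j_1}\cdots x_{j_i}v$ has exactly the required shape and lies in $\HS_i(\m^k)$. This establishes $\HS_i(\m^k)=(\m^{k+i})_{\ge i+1}$.

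The remaining assertions are then immediate from the ``Moreover'' part of Proposition~\ref{Prop:HSiBorel} applied with $\deg(u)=1$ and $\max(u)=n$: $\reg\,\HS_i(\m^k)=\deg(u)k+i=k+i$ for all $k\ge1$, and $\depth\,S/\HS_i(\m^k)=n-\max(u)=0$ for all $k\ge2$. I do not anticipate a genuine difficulty here; the only point requiring a little care is the bookkeeping in the reverse inclusion — checking that from any monomial with support of size at least $i+1$ one can strip off the $i$ smallest-index support variables and still be left with a variable of strictly larger index, so that the resulting factorization is of the exact form produced by Proposition~\ref{Prop:HSiBorel}.
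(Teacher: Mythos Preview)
Your proof is correct and follows the same route as the paper: recognize $\m$ as the principal Borel ideal $B(x_n)$ and specialize Proposition~\ref{Prop:HSiBorel}. The paper's proof is terser---it simply notes that $\m^k=B(x_n^k)$ for $k\ge2$ and invokes the proposition, declaring the case $k=1$ ``immediate''---whereas you spell out the matching of generating sets explicitly; but the underlying argument is identical, and your careful check that the depth formula from Proposition~\ref{Prop:HSiBorel} is only asserted for $k\ge2$ is exactly in line with what that proposition provides.
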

	\begin{proof}
		The assertion is immediate for $k=1$. For $k\ge2$, notice that $\m^k=B(x_n^k)$ and then apply Proposition \ref{Prop:HSiBorel}.
	\end{proof}
	Let ${\bf c}=(c_1,\dots,c_n)\in\mathbb{Z}_{\ge0}^n$.  A monomial $u= x_1^{a_1}\cdots x_n^{a_n}$ is called {\em ${\bf c}$-bounded} if $a_i\leq c_i$ for all $i$. A monomial ideal $I$ is called \textit{${\bf c}$-bounded strongly stable}, if each monomial in $\mathcal{G}(I)$ is ${\bf c}$-bounded and for all $u\in\mathcal{G}(I)$ and all integers $i<j$ for which $x_i$ divides $u$ and $x_i(u/x_j)$ is ${\bf c}$-bounded, it follows that  $x_i(u/x_j)\in I$. The smallest ${\bf c}$-bounded strongly stable ideal containing a ${\bf c}$-bounded monomial $u\in S$ is called a {\em ${\bf c}$-bounded principal Borel ideal} and is denoted by $B^{{\bf c}}(u)$.
	
	\begin{Proposition}\label{prop:cbounded}
		For any ${\bf c}$-bounded monomial $u$ and $k\geq 1$, the ideal $\HS_i(B^{{\bf c}}(u)^k)$ has linear quotients.
	\end{Proposition}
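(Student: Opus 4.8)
The plan is to reduce to the case $k=1$ and then to adapt the argument of Bayati, Jahani and Taghipour \cite[Theorem 3.4]{BJT019} for unbounded principal Borel ideals. The reduction rests on the identity $B^{{\bf c}}(u)^k=B^{k{\bf c}}(u^k)$, proved in the same spirit as $B(u)^k=B(u^k)$ in the proof of Proposition \ref{Prop:HSiBorel}. Since $B^{{\bf c}}(u)$ is generated in degree $\deg(u)$, a product of $k$ of its generators is $k{\bf c}$-bounded and dominates $u^k$ in the Borel order --- one performs the ${\bf c}$-bounded exchanges one factor at a time, and all intermediate monomials stay $k{\bf c}$-bounded since $u$ is ${\bf c}$-bounded --- which gives $B^{{\bf c}}(u)^k\subseteq B^{k{\bf c}}(u^k)$. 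For the reverse inclusion one uses that $\mathcal{G}(B^{{\bf c}}(v))$ is the set of ${\bf c}$-bounded monomials of degree $\deg(v)$ whose partial sums $\sum_{i\le j}\deg_{x_i}(\cdot)$ dominate those of $v$, and factors a generator $w$ of $B^{k{\bf c}}(u^k)$ as $w=v_1\cdots v_k$ with each $v_t\in\mathcal{G}(B^{{\bf c}}(u))$, by distributing the exponents and the excess partial sums of $w$ evenly among the $k$ factors. Granting this, it suffices to prove that $\HS_i(B^{{\bf c}}(u))$ has linear quotients.

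Set $I=B^{{\bf c}}(u)$ with ${\bf c}=(c_1,\dots,c_n)$. I would first record that $I$ has linear quotients with respect to the lex order induced by $x_1>\dots>x_n$, with $\set(v)=\{\,j<\max(v)\ :\ \deg_{x_j}(v)<c_j\,\}$ for every $v\in\mathcal{G}(I)$: for such $j$ the monomial $x_j(v/x_{\max(v)})$ is ${\bf c}$-bounded, hence lies in $I$ by ${\bf c}$-bounded strong stability, and is lex-larger than $v$; and a variable $x_j$ with $\deg_{x_j}(v)=c_j$ cannot appear in $(u_1,\dots,u_{s-1}):v$, since a lex-larger generator dividing $x_jv$ would fail to be ${\bf c}$-bounded, and one checks as usual that $(u_1,\dots,u_{s-1}):v$ is generated by variables. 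Then \cite[Lemma 1.5]{ET} gives
$$
\HS_i(I)=({\bf x}_Fv\ :\ v\in\mathcal{G}(I),\ F\subseteq\set(v),\ |F|=i).
$$
I would order the generators of $\HS_i(I)$ lexicographically and verify the linear-quotients criterion directly. Given generators $g$ and $h$ of $\HS_i(I)$ with $h>_{\lex}g$, let $p$ be the least index with $\deg_{x_p}(h)>\deg_{x_p}(g)$, so that $x_p$ divides $h:g$. The goal is to produce a generator $g'=x_p(g/x_q)$ of $\HS_i(I)$ with $p<q$; then $g':g=x_p$ and the criterion holds. Here $q$ is taken to be the largest index of $\supp(g)$ exceeding $p$, which exists because $g$ and $h$ agree below $p$ and have equal degree while $h$ exceeds $g$ at $p$, so $g$ carries positive degree above $p$. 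Writing $g={\bf x}_Fv$, one forms $g'$ by deleting this $x_q$ either from the squarefree factor ${\bf x}_F$ (replacing $F$ by $(F\setminus\{q\})\cup\{p\}$, keeping $v$) or from $v$ (replacing $v$ by the Borel move $x_p(v/x_q)$, keeping $F$), whichever lands inside $\mathcal{G}(\HS_i(I))$; in each case one checks that the new underlying monomial stays ${\bf c}$-bounded and above $u$, that $p$ belongs to its $\set$, and that the squarefree factor remains inside that $\set$ with $i$ elements.

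The crux --- the step I expect to cost the most work --- is precisely this verification: one must argue that, after possibly replacing $p$ by another index $\ell$ with $\deg_{x_\ell}(h)>\deg_{x_\ell}(g)$, or choosing a different $q$, or splitting the modification between ${\bf x}_F$ and $v$, the exchange can always be carried out so that $g'$ is a bona fide generator of $\HS_i(I)$. The ${\bf c}$-bound is what makes this heavier than in \cite[Theorem 3.4]{BJT019}: for example, when $\deg_{x_p}(v)=c_p-1$ the Borel move $x_p(v/x_q)$ expels $p$ from $\set(x_p(v/x_q))$, so the modification must then be absorbed by ${\bf x}_F$, which in turn requires $p\notin F$. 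A secondary nuisance is that an element of $\HS_i(I)$ may be written as ${\bf x}_Fv$ in several ways; fixing for each generator the lexicographically largest admissible set $F$ removes the ambiguity. Finally, the even-distribution step used in the reduction is elementary --- a transportation-type feasibility argument --- and I would isolate it as a lemma.
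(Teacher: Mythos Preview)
Your overall strategy coincides with the paper's: reduce to $k=1$ via the identity $B^{{\bf c}}(u)^k=B^{k{\bf c}}(u^k)$, then show that $\HS_i$ of a single ${\bf c}$-bounded principal Borel ideal has linear quotients. The difference is that in the paper the proof is two lines, because both steps are already in the literature: the power identity is \cite[Theorem~3.1]{AHZ2022}, and the $k=1$ case is \cite[Theorem~3.2]{HMRZ021a}, which establishes that $\HS_i(B^{{\bf c}}(u))$ has linear quotients for every ${\bf c}$-bounded principal Borel ideal. You are therefore proposing to reprove known results rather than cite them.

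Your sketches of these two ingredients are plausible outlines, not proofs. The power-identity sketch is fine modulo the ``transportation-type'' feasibility lemma you defer. For the linear quotients of $\HS_i(B^{{\bf c}}(u))$ you correctly locate the new obstruction that the ${\bf c}$-bound creates over the unbounded argument of \cite{BJT019}, but you do not actually carry out the case analysis: your parenthetical ``after possibly replacing $p$ by another index $\ell$ \dots\ or choosing a different $q$ \dots\ or splitting the modification between ${\bf x}_F$ and $v$'' is precisely where the work lies, and from your outline it is not evident that these options always cooperate (for instance when simultaneously $p\in F$, $\deg_{x_p}(v)=c_p-1$, and $q=\max(v)$ with no alternative $q$ available in $F$). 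The conclusion is nonetheless true --- \cite{HMRZ021a} proves it by a related but more carefully organized exchange argument --- so your plan would eventually succeed, but as written it is a plan rather than a proof. The practical advice is simply to invoke \cite{AHZ2022} and \cite{HMRZ021a} as the paper does.
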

	\begin{proof}
		By \cite[Theorem 3.1]{AHZ2022}, $(B^{{\bf c}}(u))^k= B^{k{\bf c}}(u^k)$. Moreover, by \cite[Theorem 3.2]{HMRZ021a}, we know that $\HS_j( B^{k{\bf c}}(u^k))$ has linear quotients for all $j$, as required.
	\end{proof}
	\subsection{Monomial complete intersections}
	
	Recall that a sequence ${\bf f}:f_1,\dots,f_m$ in $S$ is called a \textit{regular sequence} if $f_i$ is a non-zero divisor on $S/(f_1,\dots,f_{i-1})$ for all $i$.
	
	Let ${\bf u}:u_1,\dots,u_m$ be a sequence of monomials of $S$. It is easily seen that ${\bf u}$ is a regular sequence (for any order) if and only if $\supp(u_i)\cap\supp(u_j)=\emptyset$ for all $i\ne j$. 
	
	We put ${\bf u^b}=\prod_{i=1}^mu_i^{b_i}$ and $|{\bf b}|=b_1+\dots+b_m$, for ${\bf b}=(b_1,\dots,b_m)\in\ZZ_{\ge0}^m$. Moreover, we set $\max{\bf u^b}=\max\{i:b_i\ne0\}$ if ${\bf b}\ne{\bf 0}=(0,\dots,0)$.
	
	\begin{Proposition}
		Let ${\bf u}:u_1,\dots,u_m$ be a monomial regular sequence in $S$. Let $I=(u_1,\dots,u_m)$ and $d_i=\deg(u_i)$ for all $i$, and assume that $d_1\le d_2\le\dots\le d_m$. Then, for all $k\ge1$ we have
		\begin{equation}\label{eq:HSiCI}
			\HS_i(I^k)\ =\ (u_{j_1}\cdots u_{j_i}{\bf u^b}\ :\ |{\bf b}|=k,\, 1\le j_1<\dots<j_i<\max{\bf u^b}).
		\end{equation}
		In particular, for all $k\ge1$ we have $\depth\,S/\HS_i(I^k)=n-m$ and for all $k\ge2$
		$$
		\reg\,\HS_i(I^k)\ =\ d_mk+\sum_{j=1}^{m}d_{j}+\sum_{j=1}^id_{m-j}-(m-1).
		$$
	\end{Proposition}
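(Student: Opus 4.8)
The key structural observation is that a monomial regular sequence behaves, at the level of syzygies, exactly like a sequence of variables. Concretely, if $T=K[y_1,\dots,y_m]$ with $\deg y_i = d_i$ and $\varphi:T\to S$ sends $y_i\mapsto u_i$, then $S$ is a free $T$-module (via the regular sequence hypothesis, $\supp(u_i)\cap\supp(u_j)=\emptyset$), and $I^k=\varphi(\mathfrak{n}^k)S$ where $\mathfrak{n}=(y_1,\dots,y_m)\subset T$. Because $S$ is $T$-flat and $\varphi$ is, up to the monomial substitution, a polarization-type map, the minimal multigraded free resolution of $I^k$ over $S$ is obtained from that of $\mathfrak{n}^k$ over $T$ by the substitution $y_i\mapsto u_i$, together with the extra free variables $x_j$ (for $x_j$ not dividing any $u_i$) contributing nothing to Tor against $K$. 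Hence $\Tor^S_i(K,I^k)_{\bf c}\ne 0$ if and only if ${\bf x^c}=u_{j_1}\cdots u_{j_i}{\bf u^b}$ for some $|{\bf b}|=k$ and $1\le j_1<\dots<j_i<\max{\bf u^b}$, which is exactly formula \eqref{eq:HSiCI}. To make this precise I would invoke Corollary \ref{Cor:HS(m^k)}: $\HS_i(\mathfrak{n}^k)=(\mathfrak{n}^{k+i})_{\ge i+1}$, translate it through the substitution, and check that the minimal generators described are genuinely minimal (this is where the ordering $d_1\le\cdots\le d_m$ and the condition $j_i<\max{\bf u^b}$ matter, ruling out redundant generators exactly as in the principal Borel case).

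For the depth statement, I would argue that $\HS_i(I^k)$, being generated by monomials in the variables appearing in $\supp(u_1)\cup\dots\cup\supp(u_m)$, is an extension ideal $J\cdot S$ where $J\subset S'=K[\,\text{variables in }\bigcup\supp(u_j)\,]$, and $S=S'[x_j : x_j\notin\bigcup\supp(u_j)]$ is a polynomial extension in exactly $n-m'$ variables where $m' = |\bigcup\supp(u_j)|$... wait — more carefully, the relevant statement is $\depth S/\HS_i(I^k) = (n-m') + \depth S'/J$, and one shows $\depth S'/J=m'-m$ by the same argument as in Proposition \ref{Prop:HSiBorel}: $\HS_i(I^k)$ in $S'$ has, via the substitution from $\HS_i(\mathfrak{n}^k)$ over $T$, projective dimension $m-1$ for $k\ge 2$ (using $\pd\HS_i(\mathfrak{n}^k)=m-1$ from Corollary \ref{Cor:HS(m^k)} and the flatness of the substitution), so by Auslander–Buchsbaum $\depth S'/J = m' - (m-1) - 1 = m'-m$, giving $\depth S/\HS_i(I^k)=(n-m')+(m'-m)=n-m$. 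For $k=1$ one checks directly that $\HS_i(I) = (u_{j_1}\cdots u_{j_i}u_\ell : j_1<\cdots<j_i<\ell)$ still has projective dimension $m-1$ as long as $i<m$, so the formula $\depth=n-m$ holds for all $k\ge 1$.

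For the regularity formula, the cleanest route is to compute $\reg$ of the substituted resolution. Over $T$, for $k\ge 2$, Corollary \ref{Cor:HS(m^k)} gives $\HS_i(\mathfrak{n}^k)$ with $\reg = k+i$ in the standard grading; re-weighting $y_i$ by $d_i$ turns the top syzygy degree into the largest achievable value of $|{\bf b}|$ weighted by the $d_i$, plus the contributions of the chosen $u_{j_1}\cdots u_{j_i}$, plus the homological shift. The extremal Betti number is attained by pushing all the $k$ copies of ${\bf b}$ onto $u_m$ (largest degree $d_m$), choosing the $i$ linear-quotient steps to be the $i$ most expensive available indices below $\max{\bf u^b}=m$, namely $m-1,m-2,\dots,m-i$, and then adding the full linear-strand length which accounts for $\sum_{j=1}^m d_j-(m-1)$ — this is precisely the shape of $\reg$ for a complete-intersection-type resolution (Koszul complex on $u_1,\dots,u_m$ tensored appropriately). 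Assembling: $\reg\HS_i(I^k)=d_m k+\sum_{j=1}^i d_{m-j}+\big(\sum_{j=1}^m d_j-(m-1)\big)$, which is the claimed formula. I expect the main obstacle to be the bookkeeping in justifying that the $S$-free resolution is exactly the substituted $T$-free resolution with no cancellation — i.e. that minimality is preserved under $y_i\mapsto u_i$ — and in verifying that the specific generator realizing the extremal Betti number has the claimed homological position; both are by now standard (polarization / Koszul-complex arguments and the linear-quotients analysis already carried out in Proposition \ref{Prop:HSiBorel}), but they need to be stated carefully rather than waved through.
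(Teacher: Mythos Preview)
Your approach is essentially identical to the paper's: both use the flat map $\varphi:K[y_1,\dots,y_m]\to S$, $y_i\mapsto u_i$, to transfer the minimal free resolution of $\mathfrak{n}^k$ (computed via the principal Borel/$\mathfrak{m}^k$ description of Proposition~\ref{Prop:HSiBorel} and Corollary~\ref{Cor:HS(m^k)}) to that of $I^k$, and then read off $\HS_i$, the projective dimension (hence depth via Auslander--Buchsbaum), and the extremal syzygy (hence regularity) from this substituted resolution. The only difference is cosmetic: the paper obtains $\depth\,S/\HS_i(I^k)=n-m$ directly from $\pd=m$ under the flat base change, whereas you route through the intermediate subring $S'$ on the support variables---an unnecessary detour, but the content is the same.
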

	\begin{proof}
		Let $R=K[y_1,\dots,y_m]$ be the non-standard graded polynomial ring with $\deg(y_i)=d_i$ for all $i$. The $K$-algebra homomorphism $\varphi:R\rightarrow S$ defined by setting $\varphi(y_i)=u_i$ is flat, because ${\bf u}$ is a regular sequence. Let $\n=(y_1,\dots,y_m)$, then $\varphi(\n)=I$. Therefore, if $\FF$ is the minimal free $R$-resolution of $\n^k$, then $\FF\otimes_RS$ is the minimal free $S$-resolution of $I^k$. Hence $\varphi(\HS_i(\n^k))=\HS_i(I^k)$ for all $i$ and $k$.
		
		To compute $\HS_i(\n^k)$ we may employ the Eagon-Northcott complex. Alternatively, observe that $\n^k$ is equal to the principal Borel ideal $B(y_m^k)$. So, by Proposition \ref{Prop:HSiBorel},
		$$
		\HS_i(\n^k)\ =\ (y_{j_1}\cdots y_{j_i}{\bf y^b}\ :\ {\bf b}\in\ZZ_{\ge0}^m,\, |{\bf b}|=k,\, 1\le j_1<\dots<j_i<\max{\bf y^b}).
		$$
		Applying the map $\varphi$, the desired formula (\ref{eq:HSiCI}) follows.
		
		By Corollary \ref{Cor:HS(m^k)}, we have $\pd\,S/\HS_i(\n^k)=m$ for $0\le i<m$ and $k\ge1$. Thus $\pd\,S/\HS_i(I^k)=m$, and so $\depth\,S/\HS_i(I^k)\,=n-m$ by applying the Auslander-Buchsbaum formula.
		
		By the proof of Proposition \ref{Prop:HSiBorel}, $\HS_i(\n^k)$ has linear quotients with respect to the lex order. Applying the flat map $\varphi$ we then see that the $j$th multigraded free module in the minimal resolution of $\HS_i(I^k)$ has basis $\mathcal{B}_j$ given by the elements
		$$
		{\bf u}_F(\prod_{\ell=1}^iu_{j_\ell}){\bf u^b}
		$$
		satisfying the side conditions
		$$
		|{\bf b}|=k,\ \ \ \ 1\le j_1<\dots<j_i<\max{\bf u^b},\ \ \ \ F\subseteq\set((\prod_{\ell=1}^iu_{j_\ell}){\bf u^b}),\ \ \ \ |F|=j,
		$$
		where ${\bf u}_F=\prod_{i\in F}u_i$ for $F\ne\emptyset$, and ${\bf u}_\emptyset=1$. Hence,
		\begin{equation}\label{eq:reg-max}
			\reg\,\HS_i(I^k)\ =\ \max_{\bigcup_j\mathcal{B}_j}\,\{\deg({\bf u}_F(\prod_{\ell=1}^iu_{j_\ell}){\bf u^b})-j\}.
		\end{equation}
		
		Consider the monomial $w=(\prod_{\ell=1}^iy_{m-\ell})y_m^k\in\mathcal{G}(\HS_i(\n^k))$ where $k\ge2$. As in the proof of Proposition \ref{Prop:HSiBorel} we see that $\set(w)=\{1,\dots,m-1\}$. Applying the map $\varphi$ we see that the elements $${\bf b}_j\ =\ (\prod_{\ell=1}^ju_{m-\ell})(\prod_{\ell=1}^iu_{m-\ell})u_m^k$$ are basis elements of $\mathcal{B}_j$ for all $j=0,\dots,m-1=\pd\,\HS_i(I^k)$, with
		$$
		\deg{\bf b}_j-j\ =\ d_m k+\sum_{\ell=1}^{j}d_{m-\ell}+\sum_{\ell=1}^id_{m-\ell}-j.
		$$
		It is clear that $\deg{\bf b}_{m-1}-(m-1)$ maximizes (\ref{eq:reg-max}), and this proves the desired formula for the regularity.
	\end{proof}
	
	\section{Monomial ideals with homological linear powers}\label{sec3}
	
	It was expected for a brief time that if a monomial ideal has linear resolution, or it is equigenerated and has linear quotients, then the same would be true for each of its homological shift ideals. However, it was shown that this is not the case in general \cite{FH2023}. One then could ask the following
	\begin{Question}\label{Quest:Pow}
		Let $I\subset S$ be an equigenerated monomial ideal with linear powers or whose all powers have linear quotients. Does $\HS_i(I^k)$ have linear resolution or linear quotients for all $i$ and all $k\gg0$?
	\end{Question}
	
	In this regard, we have the following consequence of \cite[Theorem 1.3]{FH2023} proved by Herzog and the first author.
	\begin{Corollary}\label{Cor:FicHer}
		Let $I\subset S$ be an equigenerated monomial ideal whose all powers have linear quotients. Then, $\HS_1(I^k)$ has linear quotients for all $k\ge1$, in particular it has a linear resolution.
	\end{Corollary}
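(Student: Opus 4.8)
The plan is to simply specialize the theorem of Herzog and the first author recorded above — that an equigenerated monomial ideal with linear quotients has $\HS_1$ with linear quotients — from $I$ to the power $I^k$, for each fixed $k\ge1$.

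First I would verify that $I^k$ inherits the required hypotheses. Since $I$ is equigenerated, say in degree $d$, its power $I^k$ is equigenerated in degree $dk$; and by assumption every power of $I$ has linear quotients, so in particular $I^k$ has linear quotients. Hence $I^k$ satisfies exactly the hypotheses of \cite[Theorem 1.3]{FH2023}. Applying that result with $I^k$ in the role of $I$ yields that $\HS_1(I^k)$ has linear quotients.

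For the last assertion I would note that $I^k$, having linear quotients and being equigenerated, has a linear resolution; therefore all first multigraded shifts of $I^k$ share the same total degree $dk+1$, so $\HS_1(I^k)$ is itself equigenerated (in degree $dk+1$). Being an equigenerated monomial ideal with linear quotients, it then has a linear resolution by \cite[Theorem 8.2.15]{HH2011}.

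I do not expect a genuine obstacle: the content of the statement is precisely that \cite[Theorem 1.3]{FH2023} can be invoked with $I^k$ in place of $I$, and the only point needing a line of care is the remark that $\HS_1(I^k)$ is equigenerated — which is forced by the linearity of the resolution of $I^k$ — so that the passage from linear quotients to linear resolution is legitimate.
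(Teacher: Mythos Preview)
Your proposal is correct and matches the paper's own treatment: the paper states this corollary without proof, simply declaring it a consequence of \cite[Theorem 1.3]{FH2023}, and your argument spells out exactly that deduction --- apply the cited theorem to $I^k$, which is equigenerated with linear quotients by hypothesis, and then use equigeneration of $\HS_1(I^k)$ to pass from linear quotients to linear resolution.
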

	
	Next, we show with an example that Question \ref{Quest:Pow} has a negative answer in general. For this aim, we first recall the following consequence of \cite[Lemma 1.1]{FH2023}.
	\begin{Corollary}\label{Cor:FicHer1}
		Let $I\subset S$ be a monomial ideal with linear resolution, and let $\mathcal{G}(I)=\{u_1,\dots,u_m\}$. Then
		$$
		(u_1,\dots,u_{i-1},u_{i+1},\dots,u_m):u_i
		$$
		is generated by variables for all $i=1,\dots,m$.
	\end{Corollary}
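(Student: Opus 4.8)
The plan is to deduce the claim from a short exact sequence that peels off the generator $u_i$, and then read everything off from the associated long exact $\Tor$ sequence. First I would observe that a linear resolution forces $I$ to be equigenerated, say in degree $d$. Fixing $i$, set $J=(u_1,\dots,u_{i-1},u_{i+1},\dots,u_m)$, so that $J$ is again generated in degree $d$ and $I=J+(u_i)$. Multiplication by the degree-$d$ element $u_i$ identifies $I/J$ with $\bigl(S/(J:u_i)\bigr)(-d)$, which yields the graded short exact sequence
$$
0\longrightarrow \bigl(S/(J:u_i)\bigr)(-d)\xrightarrow{\ \cdot u_i\ } S/J\longrightarrow S/I\longrightarrow 0 .
$$

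Next I would apply $\Tor_\bullet^S(K,-)$ to this sequence and extract the three-term piece
$$
\Tor_2^S(K,S/I)_\ell\longrightarrow \Tor_1^S(K,S/(J:u_i))_{\ell-d}\longrightarrow \Tor_1^S(K,S/J)_\ell .
$$
The two outer terms are controlled purely by internal degrees: $\Tor_2^S(K,S/I)\cong\Tor_1^S(K,I)$ is concentrated in degree $d+1$ because $I$ has a linear resolution, and $\Tor_1^S(K,S/J)\cong J/\m J$ is concentrated in degree $d$ because $J$ is generated in degree $d$. Hence $\Tor_1^S(K,S/(J:u_i))$ vanishes outside degrees $0$ and $1$. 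Now, since $\mathcal{G}(I)$ is a \emph{minimal} generating set, $u_i\notin J$, so the monomial ideal $J:u_i$ is proper; thus $\Tor_1^S(K,S/(J:u_i))\cong(J:u_i)/\m(J:u_i)$, and its degree-$0$ component is zero because $J:u_i\subseteq\m$. Therefore $J:u_i$ is minimally generated in degree $1$, that is, by variables — which is the assertion. (If $m=1$ the statement is vacuous, as $J:u_i=(0)$.)

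I do not expect a serious obstacle here; the only point requiring care is the degree bookkeeping across the shift $(-d)$, together with the remark that the hypothesis is used only through the vanishing of $\beta^S_{2,\ell}(S/I)$ for $\ell\ne d+1$ — equivalently, only through $I$ having linear relations — so no assumption on $J$ beyond equigeneration enters. One could alternatively try a combinatorial route, writing $J:u_i=(\lcm(u_i,u_j)/u_i:\ j\ne i)$ and showing each minimal generator has degree $1$ by exhibiting a nonminimal first syzygy of $I$ otherwise; the subtlety is that an individual Koszul-type syzygy between $u_i$ and $u_j$ need not be a minimal generator of the syzygy module, which is why the homological argument above is cleaner. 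Finally, one may simply invoke \cite[Lemma 1.1]{FH2023} applied to the partition $\mathcal{G}(I)=\{u_i\}\cup\bigl(\mathcal{G}(I)\setminus\{u_i\}\bigr)$; the argument sketched above is essentially an unpacking of that lemma in this special case.
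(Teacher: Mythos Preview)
Your argument is correct, and you yourself identify it at the end as an unpacking of \cite[Lemma 1.1]{FH2023} applied to the partition $\{u_i\}\cup(\mathcal{G}(I)\setminus\{u_i\})$; this is exactly the paper's approach, which states the corollary as a consequence of that lemma without further proof.
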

	
	Now, we are ready to present our example.
	\begin{Example}\label{Ex:I=B(125,333)}
		\rm Let $I=B(x_1x_2x_5,x_3^3)$ be a strongly stable ideal of $S=K[x_1,\dots,x_5]$ with two Borel generators. Since $I^k$ is also a strongly stable ideal for all $k\ge1$, it follows that $I^k$ has linear quotients with respect to the lex order, in particular it has a $3k$-linear resolution. Therefore, $\HS_2(I^k)$ is equigenerated in degree $3k+2$. Notice that the monomial $u=x_1x_2x_3^{3k}$ is a minimal generator of $\HS_2(I^k)$. Indeed $x_3^{3k}$ is a minimal generator of $I^k$ and $\set(x_3^{3k})=\{1,2\}$. Let $J$ be the monomial ideal generated by the set $\mathcal{G}(\HS_2(I^k))\setminus\{u\}$. If $\HS_2(I^k)$ were to have linear resolution, then by Corollary \ref{Cor:FicHer} the ideal $J:u$ would be generated by variables. Notice that the monomial $(x_1x_2x_5)x_3^{3(k-1)}$ is a minimal generator of $I^k$. Thus $v=x_3x_4(x_1x_2x_5)x_3^{3(k-1)}$ is a minimal generator of $\HS_2(I^k)$. Then $v:u=x_4x_5\in J:u$. We claim that neither $x_4$ nor $x_5$ belong to $J:u$, so that $x_4x_5$ is a minimal generator of $J:u$ and in view of what said above $\HS_2(I^k)$ does not have linear resolution. Suppose by contradiction that $x_4\in J:u$, then for some $j\ne 4$, we would have that $g=x_4(u/x_j)$ is a minimal generator of $\HS_2(I^k)$. Since $4=\max(g)$, it would follow that $g=x_{i_1}x_{i_2}w$ where $i_1<i_2<4$ and $w=x_4u/(x_jx_{i_1}x_{i_2})\in\mathcal{G}(I^k)$. Thus, $w$ should be one of the monomials $x_3^{3k-1}x_4$ or $x_2x_3^{3k-2}x_4$ or $x_1x_3^{3k-2}x_4$. But none of these monomials belongs to $I^k$. Hence $x_4\notin J:u$. We can proceed similarly to show that $x_5\notin J:u$.
	\end{Example}
	
	In view of \cite[Theorem 1.3]{FH2023}, the following problem posed in \cite{FH2023} remains open.
	\begin{Question}
		Let $I\subset S$ be a monomial ideal with linear resolution. Is it true that $\HS_1(I)$ has linear resolution? Does $\HS_1(I)$ even has linear quotients?
	\end{Question}
	Our next result suggest that the above question has a positive answer. To prove it, we first recall the following result proved in \cite[Proposition 2.3]{HMRZ021a}.
	
	\begin{Proposition}
		Let $I\subset S$ be a monomial ideal generated in a single degree, and suppose that $I$ has linear relations. Then $\HS_1(I)$ is generated by the monomials $x_iu$ with $u\in\mathcal{G}(I)$ for which there exist $j\neq i$ and $v\in\mathcal{G}(I)$ such that $x_iu=x_jv$.
	\end{Proposition}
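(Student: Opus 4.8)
The plan is to identify $\beta_{1,{\bf a}}(I)$ with the dimension of a multigraded piece of the first syzygy module of $I$, and then let the hypothesis of linear relations collapse the admissible multidegrees. Write $\mathcal{G}(I)=\{u_1,\dots,u_m\}$, all of degree $d$, let $F_0=\bigoplus_{u\in\mathcal{G}(I)}Se_u$ with $e_u$ placed in the multidegree of $u$, and let $\varphi_0\colon F_0\to I$, $e_u\mapsto u$, be the minimal free cover. Put $\Omega=\ker\varphi_0$. First I would note that, by the construction of the minimal free resolution, $\Omega$ is the image of $F_1\to F_0$ and $F_1$ is a minimal free cover of $\Omega$; hence $\beta_{1,{\bf a}}(I)=\dim_K(\Omega/\m\Omega)_{\bf a}$ for every ${\bf a}$. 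Then I would record the degree estimate that makes everything work: $\Omega$ lives in total degrees $\ge d+1$, because $(F_0)_{\bf b}=0$ for $|{\bf b}|<d$ and $(F_0)_{\bf b}$ is at most one--dimensional for $|{\bf b}|=d$ --- spanned by $e_u$ with $u={\bf x^b}\in\mathcal{G}(I)$ when it exists --- with $\varphi_0(e_u)=u\ne0$, so $\Omega_{\bf b}=0$. Consequently $(\m\Omega)_{\bf a}=0$ whenever $|{\bf a}|=d+1$, and therefore $\beta_{1,{\bf a}}(I)=\dim_K\Omega_{\bf a}$ in that degree.

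Next I would prove the inclusion ``$\subseteq$''. Suppose $\beta_{1,{\bf a}}(I)\ne0$. Since $I$ has linear relations, $\beta_{1,j}(I)=0$ for $j\ne d+1$, so $|{\bf a}|=d+1$ and, by the previous step, there is a nonzero $s\in\Omega_{\bf a}$. The module $F_0$ has $\{({\bf x^a}/u)e_u:u\in\mathcal{G}(I),\ u\mid{\bf x^a}\}$ as $K$--basis of its multidegree--${\bf a}$ component, so $s=\sum_{u\mid{\bf x^a}}c_u({\bf x^a}/u)e_u$ with $c_u\in K$; applying $\varphi_0$ and using $s\in\Omega$ forces $\sum_u c_u=0$. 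As $s\ne0$, at least two coefficients are nonzero, say $c_p,c_q\ne0$ with $u_p\ne u_q$, whence $\lcm(u_p,u_q)\mid{\bf x^a}$. Because $u_p\ne u_q$ have degree $d$, we get $\deg\lcm(u_p,u_q)\ge d+1=|{\bf a}|$, so necessarily ${\bf x^a}=\lcm(u_p,u_q)=x_iu_p=x_ju_q$, and $i\ne j$ (the equality $i=j$ would force $u_p=u_q$). Thus every minimal generator of $\HS_1(I)$ is a monomial of the form described in the statement.

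For the reverse inclusion I would argue directly: given $u\in\mathcal{G}(I)$ together with $j\ne i$ and $v\in\mathcal{G}(I)$ satisfying ${\bf x^a}:=x_iu=x_jv$, we have $u\ne v$, $|{\bf a}|=d+1$, and the element $\sigma=x_ie_u-x_je_v$ lies in $\Omega_{\bf a}$ since $\varphi_0(\sigma)=x_iu-x_jv=0$; moreover $\sigma\ne0$ because $u\ne v$. By the first step $\beta_{1,{\bf a}}(I)=\dim_K\Omega_{\bf a}\ge1$, so ${\bf x^a}\in\HS_1(I)$. Combining the two inclusions gives the claim.

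The syzygy bookkeeping is routine; the single place where the hypothesis is genuinely used --- and which I regard as the crux --- is the step in the ``$\subseteq$'' direction where the equality $|{\bf a}|=d+1$ meets the inequality $\deg\lcm(u_p,u_q)\ge d+1$ and forces ${\bf x^a}$ to be exactly an $\lcm$ of two generators, i.e.\ a monomial of the concrete shape $x_iu=x_jv$. Without linear relations the same argument only yields that some $\lcm(u_p,u_q)$ divides ${\bf x^a}$, which is much weaker and does not pin down $\HS_1(I)$.
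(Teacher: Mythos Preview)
Your proof is correct. The paper does not actually prove this proposition: it is quoted verbatim as a known result from \cite[Proposition~2.3]{HMRZ021a} and used as a tool in the proof of Theorem~\ref{Thm:HS-1}, so there is no in-paper argument to compare against. Your approach---identifying $\beta_{1,{\bf a}}(I)$ with $\dim_K\Omega_{\bf a}$ in total degree $d+1$ via $(\m\Omega)_{\bf a}=0$, then using the degree constraint $\deg\lcm(u_p,u_q)\ge d+1=|{\bf a}|$ to force ${\bf x^a}=\lcm(u_p,u_q)$---is the natural direct argument and is complete as written. The only remark is that in the ``$\subseteq$'' direction you have shown that every multigraded shift ${\bf x^a}$ with $\beta_{1,{\bf a}}(I)\ne0$ has the required form; since all such monomials have the same total degree $d+1$, this set is automatically the minimal generating set of $\HS_1(I)$, so the statement follows.
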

	
	Let $I$ be a monomial ideal generated in degree $d$. In \cite{BHZ}, authors defined a graph $G_I$ associated to $I$ as follows: set $V(G_I)=\mathcal{G}(I)$ and  $\{u,v\}$ is an edge of $G_I$ if and only if $\deg(\lcm(u,v))=d+1$. Moreover, for all $u,v\in\mathcal{G}(I)$, let $G_I^{(u,v)}$ be the induced subgraph of $G_I$ with vertex set $$V(G_I^{(u,v)})\ =\ \{w\ :\  w \text{ divides } \lcm(u,v)\}.$$
	
	The following result was shown in \cite[Corollary 1.2]{BHZ}.\smallskip
	\begin{Proposition}
		Let $I\subset S$ be a monomial ideal generated in a single degree. Then $I$ is linearly related if and only if for all $u, v \in  \mathcal{G}(I)$ there is a path in $G_I^{(u,v)}$ connecting $u$ and $v$.
	\end{Proposition}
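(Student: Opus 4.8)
The plan is to convert the condition ``$I$ is linearly related'' into a connectivity statement for a family of graphs built from least common multiples of the generators of $I$, and then to compare those graphs with the graphs $G_I^{(u,v)}$ through an induction on $\deg\lcm(u,v)$. To this end I would first record the simplicial description of the first Betti numbers. Say $I$ is generated in degree $d$. For a monomial $w$, let $\Gamma_w$ be the simplicial complex whose faces are the subsets $F\subseteq\mathcal{G}(I)$ with $\lcm(F)$ a proper divisor of $w$; when $\deg w\ge d+1$ the vertex set of $\Gamma_w$ is $\{u\in\mathcal{G}(I):u\mid w\}$, and $\{u,u'\}$ is an edge of (the $1$--skeleton of) $\Gamma_w$ exactly when $\lcm(u,u')\ne w$. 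By the standard formula for the multigraded Betti numbers of a monomial ideal --- computing $\Tor$ from the Taylor complex, whose strand in multidegree $w$ computes $\widetilde{H}_{*-1}(\Gamma_w;K)$ because the full simplex on $\{u\in\mathcal{G}(I):u\mid w\}$ is contractible --- one gets $\Tor^S_i(S/I,K)_w\cong\widetilde{H}_{i-2}(\Gamma_w;K)$, hence
$$
\beta_{1,w}(I)\ =\ \beta_{2,w}(S/I)\ =\ \dim_K\widetilde{H}_0(\Gamma_w;K)\ =\ \#\{\text{connected components of }\Gamma_w\}-1
$$
for every monomial $w$ with $\deg w\ge d+2$ (each such $w$ that is a least common multiple of generators of $I$ is divisible by at least two generators, so no homological degree correction arises). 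Since $I$ is equigenerated, its first syzygies live in degrees $\ge d+1$; therefore $I$ is linearly related if and only if $\beta_{1,w}(I)=0$ for all $w$ with $\deg w\ge d+2$, equivalently if and only if $\Gamma_w$ is connected for every least common multiple $w$ of generators of $I$ with $\deg w\ge d+2$ (for all other $w$ one has $\beta_{1,w}(I)=0$ automatically). This translation is the only non-formal step.

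For the implication $\Leftarrow$, assume that $u$ and $v$ are joined by a path in $G_I^{(u,v)}$ for all $u,v\in\mathcal{G}(I)$. Fix a least common multiple $w$ of generators with $\deg w\ge d+2$ and two vertices $u,u'$ of $\Gamma_w$, i.e. two generators dividing $w$. The hypothesis applied to the pair $u,u'$ yields a path $u=w_0,w_1,\dots,w_k=u'$ in $G_I^{(u,u')}$, so each $w_j\in\mathcal{G}(I)$ divides $\lcm(u,u')$, which divides $w$, and $\deg\lcm(w_j,w_{j+1})=d+1<\deg w$. Hence each $w_j$ is a vertex of $\Gamma_w$ and $\lcm(w_j,w_{j+1})\ne w$, so $\{w_j,w_{j+1}\}$ is an edge of $\Gamma_w$; thus $u$ and $u'$ lie in the same connected component of $\Gamma_w$. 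As $u,u'$ were arbitrary, $\Gamma_w$ is connected, and by the first step $I$ is linearly related.

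For the implication $\Rightarrow$, which is the heart of the argument, assume $I$ is linearly related, so $\Gamma_w$ is connected for every least common multiple $w$ of generators of $I$ with $\deg w\ge d+2$. I would prove, by induction on the integer $\deg\lcm(u,v)$, that $u$ and $v$ are joined by a path in $G_I^{(u,v)}$ for all $u,v\in\mathcal{G}(I)$. If $\deg\lcm(u,v)=d$ then $u=v$; if $\deg\lcm(u,v)=d+1$ then $\{u,v\}$ is an edge of $G_I$ and $u,v$ are vertices of $G_I^{(u,v)}$, so the single edge $\{u,v\}$ does the job. If $\deg\lcm(u,v)\ge d+2$, set $w=\lcm(u,v)$; connectedness of $\Gamma_w$ furnishes a path $u=w_0,w_1,\dots,w_k=v$ with $w_j\in\mathcal{G}(I)$, $w_j\mid w$ and $\lcm(w_j,w_{j+1})\ne w$. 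Then $\lcm(w_j,w_{j+1})$ is a proper divisor of $w=\lcm(u,v)$, so $\deg\lcm(w_j,w_{j+1})<\deg\lcm(u,v)$, and the inductive hypothesis gives a path from $w_j$ to $w_{j+1}$ in $G_I^{(w_j,w_{j+1})}$. Since $\lcm(w_j,w_{j+1})\mid\lcm(u,v)$, the graph $G_I^{(w_j,w_{j+1})}$ is an induced subgraph of $G_I^{(u,v)}$, so all these paths lie inside $G_I^{(u,v)}$; concatenating them yields a path from $u$ to $v$ in $G_I^{(u,v)}$. Beyond the homological input of the first paragraph, the only real obstacle is to ensure that the induction parameter $\deg\lcm(u,v)$ strictly decreases at each step, which is exactly the content of $\lcm(w_j,w_{j+1})$ being a \emph{proper} divisor of $\lcm(u,v)$.
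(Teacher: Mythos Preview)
Your argument is correct. Note, however, that the paper does not supply its own proof of this proposition: it is quoted as \cite[Corollary 1.2]{BHZ} and used as a black box, so there is no in-paper proof to compare against. Your route---reformulating ``linearly related'' as the vanishing of $\widetilde H_0(\Gamma_w;K)$ via the Taylor-complex identity $\beta_{1,w}(I)=\dim_K\widetilde H_0(\Gamma_w;K)$, and then running an induction on $\deg\lcm(u,v)$ for the forward implication---is the standard one and is in the same spirit as the original argument in \cite{BHZ}. The one place that deserves a word of care is the homological input: the degree-$w$ strand of $T_\bullet\otimes_S K$ is the relative chain complex $C_\bullet(\sigma_w,\Gamma_w)$, where $\sigma_w$ is the full simplex on the generators dividing $w$; contractibility of $\sigma_w$ then yields $\Tor^S_i(I,K)_w\cong\widetilde H_{i-1}(\Gamma_w;K)$, exactly as you use it. Your reductions (that $\beta_{1,w}(I)=0$ automatically when $w$ is not an lcm of generators, and that for $\deg w\ge d+2$ every generator dividing $w$ does so properly) are all valid.
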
\smallskip
	
	The next result shows that if $I\subset S$ has linear resolution, at least we can say that $\HS_1(I)$ is linearly related.
	
	\begin{Theorem}\label{Thm:HS-1}
		Let $I$ be a linearly related monomial ideal generated in degree $d$. Then $\HS_1(I)$ is also linearly related. 
	\end{Theorem}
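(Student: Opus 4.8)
The plan is to deduce the result from the graph‑theoretic criterion of \cite{BHZ} recalled above. Since $I$ has linear relations, $\HS_1(I)$ is equigenerated in degree $d+1$, so it suffices to show that for any two generators $w,w'\in\mathcal{G}(\HS_1(I))$ there is a path in $G_{\HS_1(I)}^{(w,w')}$ joining $w$ and $w'$. The first step is to read off the generators: combining the description of $\HS_1(I)$ in \cite[Proposition 2.3]{HMRZ021a} with the equigeneration in degree $d+1$, one obtains $\mathcal{G}(\HS_1(I))=\{\lcm(p,q):\{p,q\}\text{ is an edge of }G_I\}$, since every product $x_iu=x_jv$ arising there equals $\lcm(u,v)$ for an edge $\{u,v\}$ of $G_I$, and a degree $d+1$ monomial lying in an ideal generated in degree $d+1$ is automatically a minimal generator.

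The heart of the argument is a local lemma: if $p,q,q'\in\mathcal{G}(I)$ and $\{p,q\},\{p,q'\}$ are edges of $G_I$, then $\lcm(p,q)$ and $\lcm(p,q')$ are equal or adjacent in $G_{\HS_1(I)}$. I would prove this using that an edge of $G_I$ contributes a single variable: writing $\lcm(p,q)=x_ap$ and $\lcm(p,q')=x_bp$, either $a=b$, whence $\lcm(p,q)=\lcm(p,q')$, or $a\neq b$, whence $\lcm(\lcm(p,q),\lcm(p,q'))=x_ax_bp$ has degree $d+2=(d+1)+1$, so that $\{\lcm(p,q),\lcm(p,q')\}$ is an edge of $G_{\HS_1(I)}$; in both cases $\lcm(p,q)$ and $\lcm(p,q')$ divide $\lcm(\lcm(p,q),\lcm(p,q'))$.

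The main construction then goes as follows. Write $w=\lcm(u,v)$ and $w'=\lcm(u',v')$ with $\{u,v\},\{u',v'\}$ edges of $G_I$. Since $I$ is linearly related, the criterion of \cite{BHZ} furnishes a path $u=g_0,g_1,\dots,g_r=u'$ in $G_I^{(u,u')}$, so every $g_\ell$ divides $\lcm(u,u')$ and $\{g_\ell,g_{\ell+1}\}$ is an edge of $G_I$. Set $h_\ell=\lcm(g_\ell,g_{\ell+1})\in\mathcal{G}(\HS_1(I))$ for $0\le\ell\le r-1$ and consider the sequence $w,h_0,h_1,\dots,h_{r-1},w'$. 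Each term divides $\lcm(w,w')$: this is clear for $w$ and $w'$, and $h_\ell$ divides $\lcm(u,u')$, which divides $\lcm(w,w')$ because $u\mid w$ and $u'\mid w'$; hence every term is a vertex of $G_{\HS_1(I)}^{(w,w')}$. Consecutive terms are equal or adjacent in $G_{\HS_1(I)}$ by the local lemma, used with common generator $u$ for the pair $w,h_0$, with common generator $g_{\ell+1}$ for the pair $h_\ell,h_{\ell+1}$, and with common generator $u'$ for the pair $h_{r-1},w'$. Deleting repeated consecutive vertices yields a walk, and hence a path, inside $G_{\HS_1(I)}^{(w,w')}$ connecting $w$ and $w'$. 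The criterion of \cite{BHZ} then shows that $\HS_1(I)$ is linearly related.

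The step I expect to be the main obstacle is the local lemma, or rather the numerical input behind it: the fact that $\lcm(p,q,q')$ has degree at most $d+2$ because each edge of $G_I$ incident to $p$ raises the degree of $p$ by exactly one. Everything else is bookkeeping needed to keep the walk inside the induced subgraph $G_{\HS_1(I)}^{(w,w')}$, which is handled by the chain of divisibilities $h_\ell\mid\lcm(u,u')\mid\lcm(w,w')$. I would also dispose of the degenerate cases $r=0$ (the sequence is just $w,w'$, joined directly by the local lemma with common generator $u=u'$) and $r=1$ for completeness.
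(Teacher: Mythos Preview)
Your proof is correct and follows essentially the same strategy as the paper's: both arguments write a generator of $\HS_1(I)$ as $\lcm(u,v)$ for an edge $\{u,v\}$ of $G_I$, pick a path in $G_I$ between representatives of the two given generators, and lift it to a path in $G_{\HS_1(I)}$ by taking $\lcm$'s of consecutive vertices (your $h_\ell$ are exactly the paper's $x_{b_\ell}w_{\ell+1}$). Your presentation is slightly cleaner in two respects---you isolate the ``local lemma'' (two edges of $G_I$ sharing a vertex give equal or adjacent generators of $\HS_1(I)$) rather than arguing it inline, and you verify explicitly that each $h_\ell$ divides $\lcm(w,w')$, a point the paper leaves implicit---but the underlying argument is the same.
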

	
	\begin{proof}
		Let $J=\HS_1(I)$ and $\alpha, \beta \in J$ with $\alpha \neq \beta$. To show that $J$ is linearly related, due to \cite[Proposition 1.1]{BHZ}, it is enough to show that $\alpha$ and $\beta$ are connected by a path in $G_J^{(\alpha,\beta)}$. Using \cite[Proposition 2.3]{HMRZ021a}, for some integers $k$ and $\ell$, we can write $\alpha=x_ku$ and $\beta=x_\ell v$ where  $u,v\in\mathcal{G}(I)$. If $u=v$, then $k\neq\ell$ and $\alpha,\beta$ are connected by an edge in $G_J^{(\alpha,\beta)}$, as required. Now we may assume that $u\neq v$. 
		
		Using the assumption that $I$ is linearly related together with \cite[Corollary 1.2]{BHZ}, we obtain a path $P: u=w_0, w_1, \ldots, w_r=v$ from $u$ to $v$ in $G_I^{(u,v)}$. In addition, we can assume that $P$ is of minimal length. Since $\deg(\lcm(w_i, w_{i+1}))=d+1$ for all $i=0, \ldots, r-1$, there exists some  $a_i \neq b_i$ with $x_{a_i}w_i= x_{b_i}w_{i+1}$  for all $i=0, \ldots, r-1$. Using \cite[Proposition 2.3]{HMRZ021a} gives $x_{a_i}w_i=x_{b_i}w_{i+1}\in\mathcal{G}(\HS_1(I))$. Note that $b_i \neq a_{i+1}$ for all $i=0, \ldots, r-2$ because $P$ is of minimal length. Indeed, if $b_i=a_{i+1}$ for some $i$, then $x_{a_i}w_i=x_{b_i}w_{i+1}=x_{a_{i+1}}w_{i+1}=x_{b_{i+1}}w_{i+2}$, which show that $w_i$ and $w_{i+2}$ are adjacent in $G_I^{(u,v)}$, a contradiction to the minimality of $P$. 
		
		This shows that $P':x_{b_0}w_1,x_{b_1}w_2,\ldots,x_{b_{r-1}}w_r$ is a path in $G_J^{(x_{b_0}w_1,x_{b_{r-1}}w_r)}$. Recall that $x_{a_0}u=x_{b_0}w_1$ and $x_{b_{r-1}}v=x_{b_{r-1}}w_r$. Next, observe that either  $\alpha=x_{a_0}u $ or $\alpha$ and $x_{a_0}u$ are adjacent in $G_J$ because $x_k(x_{a_0}u)= x_{a_0}(\alpha)$. Similarly, either $\beta=x_{b_{r-1}}v$ or $\beta$ and $x_{b_{r-1}}v$ are adjacent in $G_J$. In both cases, using $P'$  one obtains a path connecting $\alpha$ and $\beta$ in $G_J^{(\alpha,\beta)}$.
	\end{proof}
	
	The ideal in Example \ref{Ex:I=B(125,333)} shows that the conclusion of the previous theorem is no longer valid for the higher homological shift ideals.\medskip
	
	Let $I\subset S$ be a monomial ideal. We say that $I$ has \textit{homological linear powers} if $I$ has linear powers and $\HS_i(I^k)$ has linear resolution for all $i$ and all $k\ge1$. Whereas, we say that $I$ has \textit{eventually homological linear powers} if $I$ has linear powers and $\HS_i(I^k)$ has linear resolution for all $i$ and all $k\gg0$. Similarly, we say that $I$ has \textit{homological linear quotients} if $I$ has linear quotients and $\HS_i(I)$ has linear quotients for all $i$.\medskip
	
	Now, we present some classes of monomial ideals with homological linear powers.
	
	\begin{Theorem}\label{Thm:familiesHLP}
		The following are monomial ideals with homological linear powers.
		\begin{enumerate}
			\item[\textup{(a)}] Monomial ideals in two variables having linear resolution.
			\item[\textup{(b)}] ${\bf c}$-bounded principal Borel ideals.
			\item[\textup{(c)}] Cover ideals of whisker graphs.
			\item[\textup{(d)}] Principal Borel ideals.
			\item[\textup{(e)}] Hibi ideals.
		\end{enumerate}
	\end{Theorem}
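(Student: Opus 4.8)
I would treat the five families separately, in each case checking the two requirements of the definition: that $I$ has linear powers, and that $\HS_i(I^k)$ has a linear resolution for all $i\ge 0$ and all $k\ge 1$. In every case the powers $I^k$ again lie in a class for which linear quotients are already known, so the first requirement is immediate; for the second I would prove the stronger statement that $\HS_i(I^k)$ has linear quotients.

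\textit{Part \textup{(a)}.} If $I$ is principal the statement is trivial, so assume $\mathcal G(I)=\{x^{a_j}y^{b_j}\}_{j=1}^{m}$ is as in \eqref{eq:a-b} with $m\ge 2$. The minimal first syzygies of $I$ are the syzygies between consecutive generators $x^{a_j}y^{b_j}$ and $x^{a_{j+1}}y^{b_{j+1}}$, living in degree $a_j+b_{j+1}$; since $I$ has a $d$-linear resolution we must have $a_j+b_{j+1}=d+1$ for all $j$, which together with $a_j+b_j=d$ forces $a_j-a_{j+1}=1$. Hence $I=w\,\m^{t}$ with $w=x^{a_m}y^{b_1}$, $t=m-1\ge 1$, $\m=(x,y)$. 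Then $I^k=w^k\m^{tk}$ has a linear resolution (being a shift of a power of $\m$), so $I$ has linear powers; and since $\HS_i(vJ)=v\,\HS_i(J)$ for any monomial $v$, Corollary \ref{Cor:HS(m^k)} gives $\HS_i(I^k)=w^{k}\,(\m^{tk+i})_{\ge i+1}$. In two variables $(\m^{tk+i})_{\ge i+1}$ equals $\m^{tk}$ for $i=0$, equals $xy\,\m^{tk-1}$ for $i=1$, and is $0$ for $i\ge 2$; each of these has a linear resolution.

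\textit{Parts \textup{(b)} and \textup{(d)}.} For \textup{(b)}, $B^{{\bf c}}(u)^k=B^{k{\bf c}}(u^k)$ by \cite[Theorem 3.1]{AHZ2022}, which is equigenerated and ${\bf c}$-bounded strongly stable, hence has linear quotients; and Proposition \ref{prop:cbounded} states exactly that $\HS_i(B^{{\bf c}}(u)^k)$ has linear quotients for all $i$ and $k$. For \textup{(d)}, $B(u)^k=B(u^k)$ is equigenerated and strongly stable, hence has linear quotients; and the proof of Proposition \ref{Prop:HSiBorel} shows, via \cite[Theorem 3.4]{BJT019}, that $\HS_i(B(u^k))$ has linear quotients with respect to the lex order for all $i$, while $\HS_i(B(u^k))=0$ for $i\ge\max(u)$. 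In both cases every $\HS_i(I^k)$ has a linear resolution.

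\textit{Parts \textup{(c)} and \textup{(e)}, and the main obstacle.} Let $G^{*}$ be the whisker graph of a graph $G$ on $[n]$, obtained by attaching a pendant edge $\{v,v'\}$ at each vertex $v$. Its minimal vertex covers are precisely the sets $([n]\setminus W)\cup\{v':v\in W\}$ with $W$ independent in $G$, so the cover ideal $J(G^{*})$ is equigenerated in degree $n$, with generators $u_W=\big(\prod_{i\notin W}x_i\big)\big(\prod_{i\in W}x_{i'}\big)$. Ordering the $u_W$ by $|W|$ one checks that $u_{W\setminus\{i\}}:u_W=x_i$ for every $i\in W$, which yields linear quotients with $\set(u_W)=\{x_i:i\in W\}$; combined with the known fact that all powers of cover ideals of whisker graphs have linear quotients, my plan is to upgrade this to the statement that $\HS_i(J(G^{*})^k)$ has linear quotients for all $i$ and $k$ --- either by exhibiting an explicit linear-quotient order on $\mathcal G(\HS_i(J(G^{*})^k))$, using the transparent description of $\set$ (and its analogue for powers) as input, or by specializing or polarizing $J(G^{*})^k$ to a ${\bf c}$-bounded strongly stable ideal and transferring the conclusion from part \textup{(b)}. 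The same two-step strategy applies to Hibi ideals $H_P$, which are known to have linear quotients and linear powers. I expect the hard part to be precisely this upgrade: Corollary \ref{Cor:FicHer} controls only $\HS_1$, so for $i\ge 2$ one genuinely needs the combinatorial structure of $J(G^{*})^k$ and $H_P^k$, and the most delicate point will be the transfer to a class already known to have homological linear quotients --- in particular, understanding how homological shift ideals behave under the relevant specialization or polarization --- failing which one is left to construct linear-quotient orders on $\HS_i(\,\cdot\,)$ for all powers by hand.
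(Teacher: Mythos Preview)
Your treatment of parts \textup{(a)}, \textup{(b)}, and \textup{(d)} is correct and essentially identical to the paper's: for \textup{(a)} you reduce to powers of $\m$ via the same degree count, and for \textup{(b)} and \textup{(d)} you invoke exactly Propositions \ref{prop:cbounded} and \ref{Prop:HSiBorel} as the paper does.

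The genuine gap is in parts \textup{(c)} and \textup{(e)}. What you have written there is a plan, not a proof: you describe the minimal generating set of $J(G^*)$ and a linear-quotient order for $k=1$, but the passage to $\HS_i$ of \emph{all powers} is left as ``my plan is to upgrade this'' with two speculative mechanisms (an explicit order, or a transfer via polarization/specialization to the ${\bf c}$-bounded strongly stable case) whose feasibility you yourself flag as the ``hard part'' and the ``most delicate point''. Neither mechanism is carried out, and neither is routine --- indeed, the behaviour of $\HS_i$ under polarization is not established in the paper, and Example \ref{Ex:I=B(125,333)} shows that equigenerated strongly stable ideals need not have homological linear powers, so a reduction to that class must be handled with care. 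The same remarks apply verbatim to Hibi ideals.

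The paper does not attempt to prove \textup{(c)} and \textup{(e)} internally at all: it simply cites \cite[Theorem 4.8]{CF2} for whisker graphs and \cite[Corollary 4.11]{CF1} for Hibi ideals, where these statements are established by rather different machinery (Rees algebra techniques and Betti splittings, respectively). So your instinct that these two cases require substantial additional work is correct; the resolution is that this work already exists in the literature, and the theorem here is collecting known results rather than supplying new arguments for \textup{(c)} and \textup{(e)}.
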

	\begin{proof}
		(a) Let $I\subset S=K[x,y]$ be a monomial ideal in two variables having linear resolution. It is clear that principal monomial ideals have homological linear powers. Therefore, we assume that $I$ has at least two generators. From the discussion in the Subsection \ref{subsec2.1} it is clear that we can write $I=uJ$ where $u=x^py^q$ is a monomial, $J=I_{\bf a,b}$, ${\bf a,b}$ are as in (\ref{eq:a-b}), $m>1$ and $a_m=b_1=0$. Since $\HS_i((uJ)^k)=u^k\HS_i(J^k)$ for all $i$ and $k$, it is enough to show that $J$ has homological linear powers. Suppose that $\alpha(J)=d$. From Proposition \ref{Prop:HS-I-a-b}(a), it follows that $J$ has homological linear resolution if and only if
		$$
		\begin{cases}
			a_j+b_j=d&\textup{for}\,\, j=1,\dots,m,\\
			a_j+b_{j+1}=d+1&\textup{for}\,\, j=1,\dots,m-1.
		\end{cases}
		$$
		
		We deduce that $a_j-a_{j+1}=b_{j+1}-b_j=1$ for $j=1,\dots,m-1$. Since $a_m=b_1=0$, we have $a_j=d-(j-1)$ and $b_j=j-1$ for all $j=1,\dots,m$. Hence $J=\m^d$, and so $J$ has homological linear powers by applying Corollary \ref{Cor:HS(m^k)}.\smallskip
		
		The statement (b) is proved in Proposition~\ref{prop:cbounded}, statement (c) is proved in \cite[Theorem 4.8]{CF2}, statement (d) follows from Proposition \ref{Prop:HSiBorel}, and statement (e) is shown in \cite[Corollary 4.11]{CF1}.
	\end{proof}
	
	\section{Asymptotic Golodness of $\HS_i(I^k)$}\label{sec4}
	
	Let $(R,\m,K)$ be a Noetherian local ring, or a standard graded $K$-algebra with graded maximal ideal $\m$, and let ${\bf x}=x_1,\ldots,x_n$ be a minimal (homogeneous) system of generators of $\m$. We denote by $(K(R), \partial)$ the Koszul complex of $R$ with respect to ${\bf x}$. Let  $Z(R)$, $B(R)$ and $H(R)$ denote the module of cycles, boundaries and the homology of $K(R)$.\smallskip
	
	The formal power series $P_R(t)= \sum_{i \geq 0} \dim_K \Tor_i^{R} (R/\m,R/\m) t^i$ is called the {\em Poincar\'{e} series} of $R$. Serre showed that $P_R(t)$ is coefficientwise bounded above by the rational series
	\[
	\frac{(1+t)^n}{1-t\sum_{i\geq 0}\dim_K H_i({\bf x};R)t^i},
	\]
	where $n=\dim R$. We say that $R$ is a {\em Golod} ring, if the Poincar\'{e} series $P_R(t)$ of $R$ coincides with this upper bound given by Serre.\medskip
	
	It was shown by Golod (see \cite{Golod} or also \cite[Def. 5.5 and 5.6]{AvramovKustinMiller}) that $R$ is a {\em Golod} ring if and only if for each subset $\mathcal{S}$ of homogeneous elements of $\bigoplus_{i=1}^nH_i(R)$  there exists a function $\gamma$, which is defined on the set of finite
	sequences of elements from $\S$ with values in $\m\oplus\bigoplus_{i=1}^nK_i(R)$, subject to the following conditions:
	\begin{enumerate}
		\item[(G1)] if $h\in\S$, then $\gamma(h)\in Z(R)$ and $h=[\gamma(h)]$,
		\item[(G2)] if  $h_1,\dots,h_m$ is a sequence in $\mathcal{S}$ with $m>1$,  then
		$$
		\partial\gamma(h_1,\ldots,h_m)=\sum_{\ell=1}^{m-1}\overline{\gamma(h_1,\ldots,h_\ell)}\gamma(h_{\ell+1},\ldots,h_m),
		$$
		where $\bar{a} = (-1)^{i+1}a$ for $a\in K_i(R)$.
	\end{enumerate}
	A function $\gamma$ satisfying the properties (G1)-(G2) is called a {\em Massey operation} on $\mathcal{S}$.
	
	\begin{Remark}\label{Rem:useful}
		Let $\gamma$ be a Massey operation on $\mathcal{S}$. Then for each sequence $h_1,\dots,h_m$ of elements from $\S$ we have $\gamma(h_1,\dots,h_m)\in K_{a}(R)$ for some $a\ge 2m-1$.
	\end{Remark}
	\begin{proof}
		We proceed by induction on $m$, with the base case $m=1$ being trivial. Now, let $m>1$. For each $j<m$ we may assume that $\gamma(h_{1},\dots,h_{j})\in K_{b}(R)$ for some $b\ge 2j-1$, for each $1\le j\le m-1$. Then, by the property (G2) we have that
		$$
		\partial\gamma(h_1,\ldots,h_m)=\sum_{\ell=1}^{m-1}\overline{\gamma(h_1,\ldots,h_\ell)}\gamma(h_{\ell+1},\ldots,h_m).
		$$
		belongs to $K_{b}(R)K_{c}(R)=K_{b+c}(R)$ for some $b\ge 2\ell-1$, $c\ge 2(m-\ell)-1$ with $1\le\ell\le m-1$. Hence $\gamma(h_1,\dots,h_m)\in K_{a}(R)$ with $a=b+c+1\ge 2m-1$.
	\end{proof}
	
	\medskip
	
	We say that an ideal $I\subset R$ is \textit{Golod} if the quotient ring $R/I$ is Golod. Our aim in this section is to prove the following:
	\begin{Theorem}\label{Thm:HS-i-Golod}
		Let $I$ be a monomial ideal of $S=K[x_1,\dots,x_n]$ having linear powers. Then $\HS_i(I^k)$ is a Golod ideal for all $i$ and all $k\gg0$.
	\end{Theorem}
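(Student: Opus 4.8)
The plan is to deduce Theorem~\ref{Thm:HS-i-Golod} from a more general principle, following the strategy used by Herzog--Welker--Yassemi \cite{HWY}: namely, to show that a finitely generated module over the Rees algebra whose graded pieces are monomial ideals becomes Golod in high degrees, provided the degree-one part of the Rees algebra is genuinely "big" enough to absorb products. Concretely, set $M=\HS_i(\mathcal{R}(I))$, regarded as a finitely generated bigraded $\mathcal{R}(I)$-module via the $\ast$-multiplication of \eqref{eq:defModuleHS}. By Theorem~\ref{Thm:HS(R(I))} there is a $k_0$ such that $\HS_i(I^k)=I^{k-k_0}\ast\HS_i(I^{k_0})$ for all $k\ge k_0$, which is exactly equation~\eqref{eq:HS-I-k_0}. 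So it suffices to prove a statement of the form: \emph{if $M$ is a finitely generated bigraded $\mathcal{R}(I)$-module whose components $M_{(*,k)}$ are monomial ideals of $S$, and $M_{(*,k)}=I\ast M_{(*,k-1)}$ for all $k\gg0$, then $M_{(*,k)}$ is a Golod ideal for $k\gg0$.} This is the content of the announced Theorem~\ref{Thm:M_k-Golod}, and the present theorem is the case $M=\HS_i(\mathcal{R}(I))$.

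To prove such a statement I would work directly with Massey operations, mimicking the classical proof that $I^k$ is Golod for $k\ge2$. Write $N=\HS_i(I^k)$ for $k$ large and let $R=S/N$. The key structural fact is the factorization: every minimal generator of $N$ is of the form $w\cdot v$ where $v\in\mathcal{G}(\HS_i(I^{k_0}))$ and $w\in I^{k-k_0}$, and the exponent vectors add. One then uses the Taylor complex, or better the fact that $N$ has a monomial "product structure" over $I^{k-k_0}$, to construct the function $\gamma$ on finite sequences of Koszul homology classes of $R$. The idea is that a Koszul cycle representing a class in $H_j(R)$ is supported on lcm's of subsets of $\mathcal{G}(N)$; when one multiplies two such cycles coming from distinct factors $I^{k-k_0}$, the resulting lcm is divisible by $I^{k-k_0}$ for a strictly larger power, hence the product already lies in $N$ and its class vanishes — this is precisely the mechanism that forces the boundary relation (G2) to be solvable. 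Remark~\ref{Rem:useful} is then invoked to control the homological degree: $\gamma(h_1,\dots,h_m)$ lands in $K_a(R)$ with $a\ge 2m-1$, so for $m$ large the expression is forced to be zero for degree reasons, which terminates the recursive construction and yields a legitimate Massey operation. Setting $\gamma\equiv 0$ on all long enough sequences is consistent because, by the stabilized product structure, the right-hand side of (G2) already vanishes in $R$ once $m$ is large.

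The main obstacle is verifying condition (G2) at the bottom of the recursion, i.e. checking that the products $\overline{\gamma(h_1,\dots,h_\ell)}\,\gamma(h_{\ell+1},\dots,h_m)$ are genuine boundaries in $K(R)$ and not merely cycles. In the $I^k$ case of Herzog--Huneke this uses that $I^k$ is a component of the Rees algebra with the strong property $I^k=I\cdot I^{k-1}$ and that the Koszul homology of $S/I^k$ is annihilated by $\mathfrak m$ in the relevant degrees; here one has only the weaker $\ast$-product $N=I\ast\HS_i(I^{k-1})$, whose behavior on exponent vectors is governed by the vanishing/nonvanishing of $\Tor^S_i(K,I^\bullet)$ rather than by honest multiplication in $S$. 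The crux will be to show that this still forces each such product to be divisible, as a monomial, by a generator of $N$, hence to be null-homotopic. This is exactly where the hypothesis of Theorem~\ref{Thm:M_k-Golod} (the eventual equality $M_{(*,k)}=I\ast M_{(*,k-1)}$, valid by Theorem~\ref{Thm:HS(R(I))}) is used, and where a careful bookkeeping of multidegrees — of the type already rehearsed in Example~\ref{Ex:HS-Not-Inclusion} — does the real work. Once that divisibility is in place, the construction of $\gamma$ proceeds formally by induction on $m$ as in Remark~\ref{Rem:useful}, and Golodness of $\HS_i(I^k)$ for $k\gg0$ follows for every $i$.
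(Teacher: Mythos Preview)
Your reduction step---deduce Theorem~\ref{Thm:HS-i-Golod} from the general Theorem~\ref{Thm:M_k-Golod} applied to $M=\HS_i(\mathcal{R}(I))$, using that this is a finitely generated bigraded $\mathcal{R}(I)$-module by Theorem~\ref{Thm:HS(R(I))}---is exactly the paper's proof of Theorem~\ref{Thm:HS-i-Golod}, and that part is fine.

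The gap is in your sketch of Theorem~\ref{Thm:M_k-Golod} itself. Although you cite \cite{HWY}, what you actually describe is a Herzog--Huneke style argument: factor each generator of $N=M_k$ as $w\cdot v$ with $w\in I^{k-k_0}$, then claim that products of Koszul cycles have monomial coefficients ``divisible by a generator of $N$'' and hence are literally zero in $K(S/N)$. You flag this divisibility as ``the crux'' and leave it unproved. The paper explains in its final paragraph why a strongly-Golod route cannot be expected to work uniformly here: already $\HS_1((x,y)^k)=xy(x,y)^{k-1}$ satisfies $(\partial N)^2\not\subseteq N$, so the Herzog--Huneke shortcut is unavailable. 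Your argument is also tied to monomial combinatorics (Taylor complex, lcm's), whereas Theorem~\ref{Thm:M_k-Golod} is stated and proved for an arbitrary ideal $I$ in a regular local ring.

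The paper's actual mechanism is a \emph{transfer} argument, and the key input you are missing is Lemma~\ref{Lem:ast}: the $\ast$-multiplication descends to Koszul homology, and there is an $s$ with $H_i(R/M_k)=I^{k-s}\ast H_i(R/M_s)$ for all $k\ge s$. One then does induction on the \emph{length} $r$ of sequences rather than on $k$: given a Massey operation $\gamma$ on sequences of length $\le r$ from a basis $\mathcal G$ of $\bigoplus_iH_i(R/M_{s_r})$, one defines $\gamma$ at level $k\ge s_{r+1}$ by writing each $h\in H(R/M_k)$ as $\sum a_j\ast g_j$ with $a_j\in I^{k-s_r}$ and extending multilinearly. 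The obstruction to length $r{+}1$ at level $k$ then unwinds as $\sum a\ast b'$ with $a\in I^{(r+1)(k-s_r)}\subseteq I^{k-s_r}$ and $b'$ a boundary in $K(R/M_{s_r})$; Lemma~\ref{Lem:ast}(i) makes each such term a boundary in $K(R/M_k)$. No ``product lies in $N$'' claim is ever needed, and Remark~\ref{Rem:useful} terminates the induction exactly as you anticipated. The missing idea in your plan is precisely this $\ast$-transfer on Koszul homology, which replaces the unjustified divisibility step.
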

	
	This result will be a consequence of the following more general statement which is a generalization of \cite[Theorem 4.1]{HWY}.
	\begin{Theorem}\label{Thm:M_k-Golod}
		Let $R$ be a regular local ring or the polynomial ring $K[x_1,\dots,x_n]$, $I\subset R$ be an ideal, and $M=\bigoplus_{k>0}M_k$ be a finitely generated graded $\mathcal{R}(I)$-module such that $M_k$ is a proper ideal of $R$ for all $k>0$. Then $M_k$ is a Golod ideal for all $k\gg0$.
	\end{Theorem}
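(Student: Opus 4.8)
The plan is to mimic the proof of Herzog–Welker–Yassemi's theorem \cite[Theorem 4.1]{HWY}, where they show that the powers $I^k$ of a graded ideal are Golod for $k\gg0$, and to adapt their Massey operation construction to the module-theoretic setting of a finitely generated graded $\mathcal{R}(I)$-module $M=\bigoplus_{k\ge 0}M_k$ with $M_0=R$. The key structural input we have, from the standard gradedness of $\mathcal{R}(I)$, is that for $k\gg 0$ we can write $M_k=I^{k-k_0}M_{k_0}$ for some fixed $k_0$; equivalently $M_{k+1}=I\cdot M_k$ and more generally $M_k = I^{k-j}M_j$ for all $k\ge j\ge k_0$. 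This ``self-similar'' multiplicative structure is exactly what makes the Golod property propagate, just as it does for ordinary powers. The strategy is: fix $k$ very large and let $R=S$, $A=S/M_k$ with Koszul complex $K(A)$; pick a basis $\mathcal{S}$ of $\bigoplus_{i\ge 1}H_i(A)$ represented by cycles supported (after choosing monomial or homogeneous representatives) in degrees bounded below, and inductively define $\gamma(h_1,\dots,h_m)$ so that (G1) and (G2) of the Massey operation hold.

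\textbf{Key steps.} First, I would record the degree bookkeeping: if $M_{k_0}$ is generated in degrees $\le D$, then $M_k$ is generated in degrees $\le D + d(k-k_0)$ where $d=\alpha(I)$, but crucially $M_k\subseteq \mathfrak{m}^{e}$ for $e\to\infty$ as $k\to\infty$, since $M_k=I^{k-k_0}M_{k_0}\subseteq\mathfrak m^{(k-k_0)}$ once $I\subseteq\mathfrak m$ (which holds because $M_k$ is a proper ideal). Second, I would analyze cycles in the Koszul complex $K(A)$ where $A=S/M_k$: any homology class of homological degree $i\ge 1$ is represented by a cycle whose coefficients lie in $M_k$ (lifted to $S$), hence in $\mathfrak m^{e}$. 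This is the point of \cite[Theorem 4.1]{HWY}: the coefficients of the representing cycles live in a high power of the maximal ideal, and products of such cycles land in an even higher power. Third, I would build the Massey operation by induction on $m$: given $\gamma$ defined on sequences of length $<m$, the element $w=\sum_{\ell=1}^{m-1}\overline{\gamma(h_1,\dots,h_\ell)}\gamma(h_{\ell+1},\dots,h_m)$ is a cycle (standard computation using (G2) for shorter sequences and the Leibniz rule), and by Remark \ref{Rem:useful} plus the degree bound it lies in $K_a(A)$ with $a\ge 2m-1$ and with coefficients in a power $\mathfrak m^{e'}$ of $\mathfrak m$ that grows with $k$; I then need $w$ to be a \emph{boundary}, i.e. $[w]=0$ in $H_{a-1}(A)$, so that I can set $\gamma(h_1,\dots,h_m)=\partial^{-1}(w)$. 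The vanishing $[w]=0$ is forced because for $k\gg 0$ the product of two cycles, each with coefficients in $M_k\subseteq\mathfrak m^{e}$, lands in $M_k^2\cdot(\text{something})$, and one shows $M_k\cdot M_k\subseteq \mathfrak m\cdot M_k$ eventually (this is where $M_{k+1}=I\cdot M_k$, i.e. $M_k = I\cdot M_{k-1}\supseteq I^2 M_{k-2}\supseteq\cdots$, combined with $M_k\subseteq\mathfrak m^{c}$, gives $M_k^2\subseteq \mathfrak m\cdot M_k$), hence the product cycle is null-homotopic. Concretely: the relation $H_i(A)\cdot H_j(A)=0$ in Koszul homology holds once the defining ideal satisfies $M_k\cdot M_k\subseteq\mathfrak m\cdot M_k$ — this is precisely the criterion from \cite{HWY}/Herzog--Huneke, and it is implied by the self-similarity $M_k=I^{k-k_0}M_{k_0}$ for large $k$.

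\textbf{Deducing Theorem \ref{Thm:HS-i-Golod}.} Once Theorem \ref{Thm:M_k-Golod} is established, I apply it with $R=S=K[x_1,\dots,x_n]$ and $M=\HS_i(\mathcal{R}(I))=S\oplus\bigoplus_{k\ge1}\HS_i(I^k)$, which is a finitely generated graded $\mathcal{R}(I)$-module with $M_0=S$ by Theorem \ref{Thm:HS(R(I))}, and with $M_k=\HS_i(I^k)$ a proper ideal of $S$ for $k>0$ (proper because $\HS_i(I^k)$ is a monomial ideal with no unit generator, or is zero, and the zero ideal is trivially Golod). The theorem then gives that $\HS_i(I^k)$ is Golod for all $k\gg 0$, for each fixed $i$.

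\textbf{The main obstacle.} The subtle point is verifying that the auxiliary cycle $w$ appearing at each inductive step is actually a boundary, so that $\gamma(h_1,\dots,h_m)$ can be chosen with $\partial\gamma(h_1,\dots,h_m)=w$; equivalently, proving $H(A)\cdot H(A)=0$ uniformly. In \cite{HWY} this rests on the fact that for $k\gg0$ one has $I^k\cdot I^k = I\cdot I^k\cdot(\text{stuff})$ together with a careful degree/filtration argument showing the product of two homology classes is carried into a boundary. In our generality the analogous identity is $M_k\cdot M_k\subseteq\mathfrak m\cdot M_k$ for $k\gg 0$, which I would derive from $M_k=I\cdot M_{k-1}$ (valid for $k\gg 0$ by standard gradedness of $\mathcal R(I)$) and $I\subseteq \mathfrak m$: indeed $M_k\cdot M_k = I\cdot M_{k-1}\cdot M_k\subseteq \mathfrak m\cdot M_{k-1}\cdot M_k$, but this needs the slightly more careful statement that $M_{k-1}\cdot M_k\subseteq M_k$ as submodules of $S$ (true since $M_{k-1}\subseteq S$ acts on $M_k\subseteq S$ by ordinary multiplication inside $S$), giving $M_k^2\subseteq \mathfrak m\cdot M_k$ as required. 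Getting the bookkeeping of homological degrees ($a\ge 2m-1$ from Remark \ref{Rem:useful}) to mesh cleanly with this ideal-power estimate, uniformly over all sequences from $\mathcal S$, is the part that requires genuine care.
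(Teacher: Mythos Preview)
Your proposal contains a genuine gap at its central step. You claim that the relation $H_i(A)\cdot H_j(A)=0$ in Koszul homology follows from the inclusion $M_k\cdot M_k\subseteq\mathfrak m\cdot M_k$, and you call this ``precisely the criterion from \cite{HWY}/Herzog--Huneke.'' But the inclusion $J^2\subseteq\mathfrak m J$ holds for \emph{every} proper ideal $J$ of a local ring, since $J\subseteq\mathfrak m$ gives $J^2=J\cdot J\subseteq\mathfrak m\cdot J$ automatically. It therefore cannot possibly characterize Golodness: for instance, if $J=(x^2,y^2)\subset K[[x,y]]$ then $J^2\subseteq\mathfrak m J$ trivially, yet the classes $[xe_1],[ye_2]\in H_1(R/J)$ multiply to the nonzero class $[xy\,e_1\wedge e_2]\in H_2(R/J)$, so $R/J$ is not Golod. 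More generally, any complete intersection of codimension $\ge2$ is a counterexample. A related error is your assertion that Koszul cycles of $R/M_k$ can be represented by elements with coefficients in $M_k$; in fact only the \emph{boundaries} of their lifts lie in $M_kK(R)$, not the lifts themselves. The Herzog--Huneke ``strongly Golod'' condition is $(\partial J)^2\subseteq J$, which is much stronger than $J^2\subseteq\mathfrak m J$, and the paper itself shows (at the end of Section~\ref{sec4}) that $\HS_i(I^k)$ is not strongly Golod in general, so that route is also closed.

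The paper's argument is structurally quite different from what you describe. The induction is not on $k$ but on the \emph{length} $r$ of the Massey sequences, and the threshold $s_r$ for which the operation exists on $R/M_k$ (for $k\ge s_r$) grows with $r$. The key mechanism is Lemma~\ref{Lem:ast}(ii): the surjection $I^{k-s}\ast H_i(R/M_s)=H_i(R/M_k)$, coming from finite generation of Koszul homology over $\mathcal R(I)$, allows one to write every $h\in H(R/M_k)$ as $\sum a_j\ast g_j$ with $a_j\in I^{k-s_r}$ and $g_j$ in a fixed basis of $H(R/M_{s_r})$. One then defines $\gamma$ on sequences from $H(R/M_k)$ by multilinear extension of the already-constructed $\gamma$ on sequences of $g_j$'s. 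The point is that the obstruction $b$ to extending to length $r+1$ is an $I^{(r+1)(k-s_r)}$-combination of analogous obstructions $b'$ living at level $s_r$, and the extra powers of $I$ carry these into boundaries at level $k$. This transfer-of-Massey-data via the $\ast$-action is the substance of the proof; the naive product estimate you propose does not substitute for it.
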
\medskip
	
	For the proof of this result, we need some preparation.\medskip
	
	As in the statement of the theorem, let $M=\bigoplus_{k\ge0}M_k$ be a finitely generated graded $\mathcal{R}(I)$-module such that $M_k\subset R$ is a proper ideal for all $k>0$. Therefore, $\mathcal{R}(I)_\ell M_k=I^\ell M_k\subseteq M_{k+\ell}$ for all $k$ and $\ell$. We claim that there is a natural graded $R$-module homomorphism
	$$
	\ast\ :\ \mathcal{R}(I)\otimes\bigoplus_{i=1}^n(\bigoplus_{k\ge0}K_i(R/M_k))\rightarrow\bigoplus_{i=1}^n(\bigoplus_{k\ge0}K_i(R/M_k)).
	$$
	
	To prove this claim, it suffices to describe the map $\ast$ for all $\ell$ and $k$
	$$
	\ast\ :\ \left\{ \begin{array}{ccc}  \mathcal{R}(I)_\ell\otimes K_i(R/M_k) & \rightarrow & K_i(R/M_{k+\ell}) \\
		a \otimes v & \mapsto & a \ast v.
	\end{array}
	\right.
	$$
	
	For this aim, note that $K_i(R/M_k)=K_i(R)/M_kK_i(R)$, and let $K_i(R)\to K_i(R/M_k)$ be the canonical epimorphism which assigns to any $u\in K_i(R)$ the residue class $u+M_kK_i(R)$. Then, if $a\in\mathcal{R}(I)_\ell=I^\ell$ and $v=u+M_kK_i(R)\in K_i(R/M_k)$, we
	set
	$$
	a\ast v :=au+I^\ell M_kK_i(R)\subseteq M_{k+\ell}K_i(R).
	$$
	
	It is routine to check that map is well defined, in the sense that it is independent from the particular choice of the representative $u$ of $v$. It is also trivial to show that for $a\in \mathcal{R}(I)_j=I^j$, $b \in \mathcal{R}(I)_\ell=I^\ell$ and $v \in K_i(R/M_k)$ we have $(ab)\ast v=a\ast(b\ast v)$.
	
	\begin{Lemma}\label{Lem:ast}
		Let $\ast$ be the map introduced before. Then
		\begin{enumerate}
			\item[\textup{(i)}] For all $\ell$ and $k$, $\ast$ induced a map
			\begin{align*}
				\mathcal{R}(I)_\ell\otimes H_i(R/M_k)\ &\rightarrow\ H_i(R/M_{k+\ell}),\\
				\quad a\otimes [z]\ &\mapsto\ a \ast [z] := [a \ast z].
			\end{align*}
			We will denote the image of this map by $I^\ell\ast H_i(R/M_k)\subseteq H_i(R/M_{k+\ell})$.\medskip
			\item[\textup{(ii)}] There exists an integer $s$ such that for all $k\ge s$, $i\ge1$, and $\ell\ge0$, we have
			$$
			\mathcal{R}(I)_\ell\ast H_i(R/M_{k})=I^\ell\ast H_i(R/M_{k})=H_i(R/M_{k+\ell}).
			$$
		\end{enumerate}
	\end{Lemma}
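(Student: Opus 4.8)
The plan is to prove the two parts of Lemma~\ref{Lem:ast} in sequence, the first being essentially formal and the second being where the real content of the finite generation of $M$ gets used.

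\textbf{Part (i).} First I would check that $\ast$ descends from the Koszul complexes to homology. Fix $\ell$ and $k$ and let $a\in I^\ell=\mathcal{R}(I)_\ell$. I claim that for $z\in Z_i(R/M_k)$ a cycle, $a\ast z$ is a cycle in $K_i(R/M_{k+\ell})$, and that if $z$ is a boundary then so is $a\ast z$. Both follow by lifting to $K(R)$ itself: pick $u\in K_i(R)$ representing $z$, so $\partial u\in M_kK_{i-1}(R)$; then $\partial(au)=a\,\partial u\in I^\ell M_kK_{i-1}(R)\subseteq M_{k+\ell}K_{i-1}(R)$, which says exactly that $au+M_{k+\ell}K_i(R)$ is a cycle in $K_i(R/M_{k+\ell})$; by construction this residue class is $a\ast z$. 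Similarly, if $z=\partial w$ in $K_i(R/M_k)$ with $w\in K_{i+1}(R/M_k)$, lift $w$ to $w'\in K_{i+1}(R)$; then $a\ast z=[\,a\,\partial w' + M_{k+\ell}K_i(R)\,]=[\,\partial(aw')+M_{k+\ell}K_i(R)\,]$, a boundary. So the assignment $a\otimes[z]\mapsto[a\ast z]$ is well defined on $H_i(R/M_k)$, and it is clearly $R$-linear and compatible with composition because $\ast$ on the Koszul level is. This gives the map and the notation $I^\ell\ast H_i(R/M_k)\subseteq H_i(R/M_{k+\ell})$.

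\textbf{Part (ii).} The point is that the collection $\bigoplus_k H_i(R/M_k)$ is, for each fixed $i\ge 1$, a finitely generated graded $\mathcal{R}(I)$-module, so by standard graded-ring theory it is generated in bounded degree; then "generated in bounded degree" combined with surjectivity onto each high graded piece will give the equalities. To see the finite generation, I would realize $\bigoplus_k H_i(R/M_k)$ as a subquotient of a Koszul-type complex of the $\mathcal{R}(I)$-module $M$: since $R$ is regular (or polynomial) and $M$ is a finitely generated graded $\mathcal{R}(I)$-module, the Koszul homology $H_i(\mathbf{x};R/M) = \bigoplus_k H_i(\mathbf{x};R/M_k)$ is a finitely generated graded $\mathcal{R}(I)$-module — indeed each $H_i(\mathbf{x};R/M)\cong H_i(K(R)\otimes_R (R/M))$ is the homology of a bounded complex of finitely generated graded $\mathcal{R}(I)$-modules (note $R/M=\bigoplus_k R/M_k$ is finitely generated over $\mathcal{R}(I)$ because $M$ is and $R/M_0=0$... more precisely $R/M$ sits in the exact sequence $0\to M\to \mathcal{R}(I)\to R/M\to 0$ up to the grading shift, hence is finitely generated), and homology of a bounded complex of Noetherian graded modules over a Noetherian graded ring is Noetherian. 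Consequently there is $s$ such that for all $i\ge 1$ and all $k\ge s$,
$$
H_i(R/M_k)\ =\ \sum_{\ell\ge 1}\mathcal{R}(I)_\ell\ast H_i(R/M_{k-\ell}),
$$
and since $\mathcal{R}(I)$ is standard graded this reduces to $H_i(R/M_k)=\mathcal{R}(I)_1\ast H_i(R/M_{k-1})$ for $k\ge s$. Iterating, $\mathcal{R}(I)_\ell\ast H_i(R/M_k)=H_i(R/M_{k+\ell})$ for all $k\ge s$, $\ell\ge 0$, $i\ge 1$. Finally $\mathcal{R}(I)_\ell\ast H_i(R/M_k)=I^\ell\ast H_i(R/M_k)$ by the very definition of the $\mathcal{R}(I)$-action (the degree-$\ell$ piece of $\mathcal{R}(I)$ is $I^\ell$).

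\textbf{Main obstacle.} The delicate point is making the finite-generation argument in (ii) airtight: one must be careful that the Koszul differential on $K(R)\otimes_R(R/M)$ is $\mathcal{R}(I)$-linear (it is, since $\mathbf{x}\subseteq R=\mathcal{R}(I)_0$ acts "vertically" and commutes with the $\mathcal{R}(I)$-grading), and that $R/M$ — or rather its relevant truncation — really is a finitely generated $\mathcal{R}(I)$-module so that Noetherianity applies. Once the homology modules $H_i(\mathbf{x};R/M)$ are known to be finitely generated and graded over the standard graded Noetherian ring $\mathcal{R}(I)$, the existence of a uniform $s$ (uniform over the finitely many relevant $i\in\{1,\dots,n\}$) and the passage to the equalities claimed is routine. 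I would also remark that this is precisely the mechanism by which \cite[Theorem 4.1]{HWY} generalizes here, the difference being that there $M_k=I^k$ so $R/M$ is the "reverse" of the Rees algebra, whereas now $M$ is an arbitrary finitely generated graded $\mathcal{R}(I)$-module.
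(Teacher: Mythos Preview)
Your argument for part~(i) is correct and matches the paper's proof essentially verbatim.

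In part~(ii), however, there is a genuine gap. You claim that $R/M=\bigoplus_{k\ge0}R/M_k$ is a finitely generated $\mathcal{R}(I)$-module, justifying this via an exact sequence $0\to M\to\mathcal{R}(I)\to R/M\to 0$. But no such sequence exists: in degree $k$ it would read $0\to M_k\to I^k\to R/M_k\to 0$, which makes no sense since $M_k$ need not lie in $I^k$ (in fact the module structure forces $I^k=I^kM_0\subseteq M_k$, the reverse containment), and in any case the cokernel of $M_k\hookrightarrow R$ is not a quotient of $I^k$. More to the point, $R/M$ is \emph{not} finitely generated over $\mathcal{R}(I)$ in general: already for $R=K[x]$, $I=(x)$, $M_k=I^k$, the element $1+M_k\in R/M_k$ never lies in the image of $I\cdot(R/M_{k-1})$, so one needs a new generator in every degree. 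Consequently your Noetherianity argument for $H_i(\mathbf{x};R/M)$ breaks down, since the terms of the Koszul complex $K(R)\otimes_R(R/M)$ are not finitely generated $\mathcal{R}(I)$-modules.

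The paper repairs this by shifting from $R/M$ back to $M$ itself. Since $R$ is regular, $\mathbf{x}$ is a regular sequence on $R$, so the long exact Koszul sequence attached to $0\to M_k\to R\to R/M_k\to 0$ collapses (for $k\ge1$, using $M_k\subseteq\m$) to natural isomorphisms $H_i(R/M_k)\cong H_{i-1}(M_k)$ for all $i\ge1$. These isomorphisms are compatible with multiplication by elements of $I^\ell$, so $\bigoplus_{k}H_i(R/M_k)\cong H_{i-1}(\mathbf{x};M)$ as graded $\mathcal{R}(I)$-modules. Now $M$ \emph{is} finitely generated by hypothesis, so its Koszul homology is finitely generated over the Noetherian ring $\mathcal{R}(I)$, and from that point on your argument (standard-gradedness, a uniform bound $s$ over the finitely many relevant $i$, and iteration) goes through unchanged.
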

	\begin{proof}
		(i) Let $v=u+M_{k}K_i(R)\in K_i(R/M_{k})$ and $a\in\mathcal{R}(I)_\ell=I^\ell$. It is enough to show that $a*v$ is a cycle whenever $v$ is a cycle and that it is a boundary whenever $v$ is a boundary. For both claims, notice that $\partial(au)=a\partial(u)$.
		
		If $v\in Z_i(R/M_k)$ is a cycle, then $\partial(v)=0$. Hence $\partial(u)\in M_{k}K_{i-1}(R)$. Thus $\partial(au)=a\partial(u)\in I^\ell M_{k}K_{i-1}(R)\subseteq M_{k+\ell}K_{i-1}(R)$, so that $\partial(a\ast v)\in Z_i(R/M_{k+\ell})$ is again a cycle.
		
		Similarly, if $v\in M_{k}B_i(R)$ is a boundary, then $v=\partial(w)$ for some $w\in M_{k}K_{i+1}(R)$. As before, $\partial(a*w)=a*v$, and since $a*w\in M_{k+\ell}K_{i+1}(R)$, it then follows that $a*v\in M_{k+\ell}B_i(R)$ is a boundary, as desired.\smallskip
		
		(ii) The assumptions guarantee that $M_{k+1}=IM_{k}$ for all $k\gg0$. Since $M$ is a finitely generated graded $\mathcal{R}(I)$-module, the Koszul homology module of $M$, $H_i(M)=H_i(R)\otimes M$, is a finitely generated graded $\mathcal{R}(I)$-module with $k$th graded component $H_i(M)_k=H_i(M_k)$. Since $M_k$ is an ideal and $H_i(R/M_k)\cong H_{i-1}(M_k)$, it follows that there exists an integer $s_i$ such that $H_i(R/M_{k})=I^{k-s_i}\ast H_i(R/M_{s_i})$ for all $k\ge s_i$, where the multiplication $\ast$ is described in Lemma \ref{Lem:ast}(i). Therefore, setting $s=\max\{s_0,\dots,s_{n-1}\}$, the assertion (ii) follows.
	\end{proof}
	
	We are now ready for the proof of the main result.
	\begin{proof}[Proof of Theorem \ref{Thm:M_k-Golod}]
		In the case that $R$ is the polynomial ring, let $\widehat{R}$ be the $\m$-adic completion of $R$. Since $H(\widehat{R}/J\widehat{R})=H(R/J)$ for any ideal $J\subset R$, we can replace $R$ by its completion and so we may assume that $R$ is local.
		
		We claim the following: for any integer $r\geq 1$, there exists an integer $s_r$ such that for all $k\geq s_r$ and each homogeneous subset $\S\subset \bigoplus_{i=1}^nH_i(R/M_k)$ there exists a function $\gamma$, defined on the set of sequences of elements of $\S$ of length $\le r$, such that the properties (G1) and (G2) hold.
		
		The claim will yield the desired result, since for any such function $\gamma$ and any $r\geq n/2+1$, we necessarily have $\gamma(h_1,\ldots,h_r)=0$ by Remark \ref{Rem:useful}.
		
		We will prove the claim by induction on $r$. For $r=1$, we may choose $s_r=1$ and the assertion is trivial.
		
		Let $r\geq 1$ and assume that the claim holds for all integers $\leq r$. By Lemma \ref{Lem:ast}(ii) there exists an integer $s$ such that $I^{k-s}\ast H_i(R/M_{s})=H_i(R/M_{k})$ for all $k\geq s$ and all $i>0$. Set $s_{r+1}=\max\{s_r,s\}+1$. Let $\mathcal{G}=\{g_1,\ldots,g_t\}$ be a homogeneous $K$-basis of $\bigoplus_{i=1}^nH_i(R/M_{s_r})$. By induction hypothesis there exists a function $\gamma$, defined on  the set of sequences of elements of $\mathcal{G}$ of length $\leq r$, such that (G1) and (G2) hold.
		
		Let $k\ge s_{r+1}$ and let $\S\subset \bigoplus_{i=1}^nH_i(R/M_k)$ be any set of homogeneous elements. We are going  to define a function $\gamma$ on sequences from $\S$ of length $r+1$ satisfying the conditions (G1) and (G2).
		
		First, let $h_1,\ldots,h_m$ be any sequence of elements $h_i\in\S$ of length $m\leq r$. Since $s_{r+1}>s$, each $h_i$ can be written as
		$h_i=\sum_{j=1}^ta_{ij}g_j$ with $a_{ij}\in I^{k-s_r}$ and where all $g_j\in H_\ell(R/M_{s_r})$ if $h_i\in H_\ell(R/M_k)$. We define the map $\gamma$ on this sequence by multi-linear extension. That is, we set
		$$
		\gamma(h_1,\ldots,h_m)\ =\ \sum_{j_1=1}^t\sum_{j_2=1}^t\cdots \sum_{j_m=1}^t(a_{1j_1}a_{2j_2}\cdots a_{mj_m})\ast\gamma(g_{j_1},\ldots,g_{j_m}),
		$$
		where we regard $a_{1j_1}\cdots a_{mj_m}$ as an element of $I^{k-s_r}$, so that $\gamma(h_1,\ldots,h_m)\in K(R/M_k)$.\smallskip
		
		Now we verify that $\gamma$ satisfies (G1) and (G2) on sequences from $\S$ of length $\leq r$.\smallskip
		
		\textit{Proof of} (G1): Let $h\in\S$ with presentation $h=\sum_{j=1}^t a_{j}g_j$ where  $a_j\in I^{k-s_r}$ for all $j$.
		Since $\gamma(g_j)\in Z(R/M_{s_r})$ then $a_{j} \ast \gamma(g_j)\in Z(R/M_{k})$, and hence we see
		that $\gamma(h)=\sum_{j=1}^t a_{j} \ast \gamma(g_j)$ belongs to $Z(R/M_{k})$ as well. Moreover, we have
		$$
		[\gamma(h)]=\sum_{j=1}^t [a_{j} \ast \gamma(g_j)]= \sum_{j=1}^t a_{j} \ast [\gamma(g_j)] = \sum_{j=1}^t a_{j}g_j=h,
		$$
		as desired.\hfill$\square$\medskip
		
		\textit{Proof of} (G2): Since $\partial$ is linear with respect to $\ast$ we have
		$$
		\partial \gamma(h_1,\ldots,h_m)= \sum(\prod_{i=1}^ma_{ij_i}) \ast \partial \gamma(g_{j_1},\ldots,g_{j_m}),
		$$
		where the sum runs over all $1\le j_k\le t$. For each summand in $\partial \gamma(h_1,\ldots,h_m)$, by using
		$a \ast (v+w) = a\ast v + a \ast w$ and $(ab) \ast (vw) = (a \ast v) (b \ast w)$, we have
		\begin{align*}
			(\prod_{i=1}^ma_{ij_i}) \ast \partial \gamma(&g_{j_1},\ldots,g_{j_m})
			= (\prod_{i=1}^ma_{ij_i}) \ast \sum_{\ell=1}^{m-1} \overline{\gamma(g_{j_1},\ldots,g_{j_\ell})} \gamma(g_{j_{\ell+1}},\ldots,g_{j_m})\\
			&=\ \sum_{\ell=1}^{m-1} (\prod_{i=1}^ma_{ij_i}) \ast \big( \overline{\gamma(g_{j_1},\ldots,g_{j_\ell})} \gamma(g_{j_{\ell+1}},\ldots,g_{j_m})\big)\\
			&=\ \sum_{\ell=1}^{m-1}\big((\prod_{i=1}^\ell a_{ij_i}) \ast \overline{\gamma(g_{j_1},\ldots,g_{j_\ell})}\big) \big((\prod_{i=\ell+1}^ma_{ij_i}) \ast \gamma(g_{j_{\ell+1}},\ldots,g_{j_m})\big).
		\end{align*}
		Summing over all indices $j_t$ yields the desired identity (G2), as desired.\hfill$\square$\medskip
		
		In order to define $\gamma$ on sequences  of elements of $\S$ of length $r+1$, it suffices to show that for any sequence $h_1,\ldots,h_{r+1}$ of elements from $\S$, the element
		$$
		b = \sum_{\ell=1}^{r}\overline{\gamma(h_1,\ldots,h_\ell)}\gamma(h_{\ell+1},\ldots,h_{r+1}),
		$$
		is a boundary of $K(R/M_k)$. To this end, observe that $b$ is a linear combination of expressions of the form
		$$
		b'=\sum_{\ell=1}^{r}\overline{\gamma(g_{k_1},\ldots,g_{k_\ell})}\gamma(g_{k_{\ell+1}},\ldots,g_{k_{r+1}})
		$$
		with coefficients in $I^{(r+1)(k-s_r)}\subset I^{k-s_r}$. Let $a$ be the coefficient of $b'$. Since $b'$ is a boundary in $K(R/M_{s_r})$ and $a\in I^{k-s_r}$, it follows that $ab'$ is a boundary in $K(R/M_{k})$. Thus $b\in B(R/M_k)$, as desired.
	\end{proof}
	
	Finally, we can prove our final result in the paper.
	\begin{proof}[Proof of Theorem \ref{Thm:HS-i-Golod}]
		By Theorem \ref{Thm:HS-lp}, the $i$th homological shift algebra of $I$ is a finitely generated (bi)graded $\mathcal{R}(I)$-module for which $\HS_i(\mathcal{R}(I))_k=\HS_i(I^k)$ a proper ideal of $S$ for all $k>0$. So, the assertion follows from Theorem \ref{Thm:M_k-Golod}.
	\end{proof}
	
	In \cite{HHun}, Herzog and Huneke introduced the concept of \textit{strongly Golod} ideal. Let $I\subset S$ be a graded ideal. We denote by $\partial I$ the graded ideal generated  by all partial derivatives of the generators of $I$. The ideal $I$ is called \textit{strongly Golod} if $(\partial I)^2\subseteq I$.\smallskip
	
	By \cite[Theorem 1.1]{HHun} any strongly Golod ideal is Golod. It is easy to see that $I^k$ is strongly Golod for all $k\ge2$ \cite[Theorem 2.1.(d)]{HHun}. Therefore, one is led to ask whether $\HS_i(I^k)$ is strongly Golod as well for all monomial ideals $I\subset S$, all $k\gg0$ and all $i$. Next, we show that this is not the case in general.\smallskip
	
	Let $S=K[x,y]$ and $I=(x,y)$. Using Proposition \ref{Prop:HS-I-a-b} we obtain that
	$$
	\HS_1(I^k)=(x^ky,x^{k-1}y^2,\dots,xy^{k}).
	$$
	Then $\partial_y(x^ky)^2=x^{2k}\notin\HS_1(I^k)$. So $\HS_1(I^k)$ is not strongly Golod for all $k\ge1$. Nonetheless, since $I$ has linear powers, by Theorem \ref{Thm:HS-i-Golod} we have that $\HS_1(I^k)$ is Golod for all $k\gg0$. Indeed, in this example the ideal $\HS_1(I^k)$ is Golod for all $k\ge1$, because $\HS_1(I^k)=xy(x,y)^{k-1}$ has linear resolution, and ideals with linear resolutions are Golod \cite[Theorem 4]{HRW}.\bigskip

	\section{Declarations}
	\noindent\textbf{Competing interests}
	The authors have no competing interests to declare that are relevant to the content of this article.
	
	\noindent\textbf{Funding}
	A. Ficarra was partly supported by INDAM (Istituto Nazionale di Alta Matematica), and also by the Grant JDC2023-051705-I funded by\\ MICIU/AEI/10.13039/501100011033 and by the FSE+. A. A. Qureshi is supported by Scientific and Technological Research Council of Turkey T\"UB\.{I}TAK under the Grant No: 124F113, and is thankful to T\"UB\.{I}TAK for their support.

\end{document}